\newtheorem{prop}{Proposition}[section]
\newtheorem{lemma}[prop]{Lemma}
\newtheorem{theorem}[prop]{Theorem}
\DeclareMathOperator{\Poly}{Poly}
\DeclareMathOperator{\Deg}{Deg}
\newcommand{\eps}{\epsilon}
\newcommand{\RR}{\mathbb{R}}
\newcommand{\cZ}{\mathcal{Z}}
\newcommand{\tcZ}{\tilde{\mathcal{Z}}}
\title{Distinct distance estimates and low degree polynomial partitioning}
\author{Larry Guth}
\begin{document}

\maketitle

\begin{abstract} We give a shorter proof of a slightly weaker version of a theorem from \cite{GK2}: we prove that if $\frak L$ is a set of $L$ lines in $\RR^3$ with at most $L^{1/2}$ lines in any low degree algebraic surface, then the number of $r$-rich points of $\frak L$ is $\lesssim L^{(3/2) + \epsilon} r^{-2}$.  This result is one of the main ingredients in the proof of the distinct distance estimate in \cite{GK2}.  With our slightly weaker theorem, we get a slightly weaker distinct distance estimate: any set of $N$ points in $\RR^2$ determines at least $c_\epsilon N^{1 - \epsilon}$ distinct distances.

\end{abstract}

In \cite{E}, Erd{\H o}s asked how few distinct distances may be determined by a set of $N$ points in the plane.  
He conjectured that a square grid of points is near-optimal, giving a conjectural lower bound of $c N (\log N)^{-1/2}$.  Quite recently, in \cite{ES}, Elekes and Sharir suggested a new approach to this problem, connecting it to the incidence geometry of curves in 3-dimensional space.  This approach was carried out by Katz and the author in \cite{GK2}, proving that any set of $N$ points determines $\ge c N (\log N)^{-1}$ distinct distances.  In this paper, we give a variation of the most difficult step of the proof.  We will prove a slightly weaker result, but using a shorter argument.

The main work in \cite{GK2} is an estimate about lines in $\RR^3$.  If $\frak L$ is a set of lines in $\RR^3$, then a point $x$ is called $r$-rich if it lies in at least $r$ lines of $\frak L$.  We write $P_r(\frak L)$ for the set of $r$-rich points of $\frak L$.

\begin{theorem} \label{gkthm} (Theorem 1.2 in \cite{GK2}) If $\frak L$ is a set of $L$ lines in $\RR^3$ with at most $L^{1/2}$ lines in any plane or regulus, and if $2 \le r \le L^{1/2}$, then $|P_r(\frak L)| \le C L^{3/2} r^{-2}$.  
\end{theorem}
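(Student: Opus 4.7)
The plan is to prove the sharp bound $|P_r(\frak L)| \le C L^{3/2} r^{-2}$ by strong induction on $L$, via a polynomial partition of $\RR^3$ at degree $D \sim L^{1/2}$ combined with an algebraic analysis of the partitioning surface that exploits the hypothesis on planes and reguli.

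\textbf{Partition and cell contribution.} Polynomial ham sandwich produces a nonzero $P$ of degree at most $D$ such that $\RR^3 \setminus Z(P)$ is a disjoint union of $O(D^3)$ open cells $O_\alpha$, each met by at most $O(L/D^2)$ lines of $\frak L$; indeed, any line not contained in $Z(P)$ crosses it in at most $D$ points and hence enters at most $D+1$ cells, so $\sum_\alpha L_\alpha \le L(D+1)$. Inside each cell a direct pair-count gives $|P_r(\frak L) \cap O_\alpha| \le L_\alpha^2/r^2$, because each $r$-rich point accounts for at least $\binom{r}{2}$ incident line pairs among the $L_\alpha$ lines in the cell. Summing, with $\max_\alpha L_\alpha \lesssim L/D^2$, bounds the cell contribution by $L^2/(Dr^2)$, which is $\sim L^{3/2}/r^2$ for $D \sim L^{1/2}$ and meets the target.

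\textbf{Algebraic contribution.} The remaining rich points lie on $Z(P)$. For each such $p$, either at least $r/2$ of its incident lines lie in $Z(P)$, or at least $r/2$ do not. The latter ``transverse'' case admits the easy incidence bound $2LD/r \sim L^{3/2}/r$, since every non-contained line meets $Z(P)$ in at most $D$ points; but this is off by a factor of $r$. To sharpen it, one invokes the Salmon flecnode polynomial $\mathrm{Flec}(P)$, of degree at most $11D$: a transverse rich point forces a flecnodal condition on the incident lines, so all such points lie on the curve $Z(P) \cap Z(\mathrm{Flec}(P))$ of degree $O(D^2)$, and a secondary planar incidence estimate there recovers the missing factor of $r$. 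For rich points whose incident lines lie in $Z(P)$, the Cayley--Salmon theorem ensures that any irreducible component of $Z(P)$ containing more than $11D^2$ lines is ruled; the hypothesis that no plane or regulus contains more than $L^{1/2}$ lines then caps the number of lines on each irreducible component, and the inductive hypothesis applied to each component separately closes the bound.

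\textbf{Main obstacle.} The central difficulty is precisely the sharpening of the algebraic bound: converting the easy $L^{3/2}/r$ count of transverse rich points on $Z(P)$ into the sharp $L^{3/2}/r^2$ bound is where the no-plane/no-regulus hypothesis is essentially used, via the flecnode polynomial and a careful analysis of ruled surfaces, and this is where most of the technical effort of \cite{GK2} is concentrated. Once this algebraic estimate is in place, balancing the implicit constants so that the cellular and algebraic contributions jointly sit under $CL^{3/2}r^{-2}$ for a single $C$ (closing the induction) is routine.
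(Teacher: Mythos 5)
You should first note that this paper does not actually prove Theorem \ref{gkthm}: it is quoted from \cite{GK2}, and the entire point of the present paper is to \emph{avoid} reproving it, substituting the slightly weaker Theorem \ref{mainincid} proved by constant-degree partitioning ($D = D(\epsilon)$ independent of $L$) together with induction on $L$ and the bookkeeping of Theorem \ref{clusterinZr}. What you have written is an outline of the original high-degree argument of \cite{GK2}, which the paper describes as involved enough to warrant this replacement; and as an outline it has concrete gaps rather than merely suppressed routine details.

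Two gaps in particular. First, with your choice $D \sim L^{1/2}$ the transverse contribution on $Z(P)$ really is only $2LD/r \sim L^{3/2}/r$, and the repair you propose does not work: the flecnode polynomial detects lines that agree with $Z(P)$ to third order (in particular lines \emph{contained} in $Z(P)$), so a point through which $r/2$ lines cross $Z(P)$ transversally imposes no flecnodal condition whatsoever, and such points need not lie on $Z(P) \cap Z(\mathrm{Flec}(P))$. The standard way to recover the factor of $r$ is instead to partition at degree $D \sim L^{1/2}/r$ and use Szemer\'edi--Trotter (Theorem \ref{szemtrot}) rather than the trivial pair count inside the cells; then the transverse term is $\lesssim LD/r \sim L^{3/2}r^{-2}$ outright. (In \cite{GK2} the flecnode/Cayley--Salmon machinery belongs to the $r=2$ case; the $r \ge 3$ case runs through critical and flat points of $Z(P)$.) Second, and more seriously, your closing step --- ``the inductive hypothesis applied to each component separately closes the bound'' --- is precisely the step that fails. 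A sub-collection $\frak L' \subset \frak L$ of $L' < L$ lines lying in an irreducible component of $Z(P)$ inherits only the bound of at most $L^{1/2}$ lines in any plane or regulus, not $(L')^{1/2}$, so the statement of Theorem \ref{gkthm} cannot be applied to $\frak L'$ inductively. This non-inheritance of the hypothesis is exactly the obstacle singled out in the introduction of this paper, and the stronger inductive statement of Theorem \ref{clusterinZr}, with its exceptional family $\mathcal{Z}$ of low-degree surfaces each containing at least $L^{(1/2)+\epsilon}$ lines, is the device invented to circumvent it. Your outline assumes that difficulty away, so the induction as you have set it up does not close.
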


The distinct distance estimate follows from combining the approach of Elekes and Sharir with this bound.  The proof of Theorem \ref{gkthm} is somewhat involved.  There are different arguments for the cases $r=2$ and $r \ge 3$ and each argument is pretty long.  The case $r \ge 3$ uses the idea of polynomial partitioning, which will also be central to this paper.  The case $r=2$ uses the theory of ruled surfaces.  We will prove a slightly weaker result using only polynomial partitioning.  

\begin{theorem} \label{mainincid} For any $\epsilon > 0$, there are $D(\epsilon)$, $K(\epsilon)$ so that the following holds.

If $\frak L$ is a set of $L$ lines in $\RR^3$, and there are less than $ L^{(1/2) + \epsilon}$ lines of $\frak L$ in any irreducible algebraic surface of degree at most $D$, and if $2 \le r \le 2 L^{1/2}$, then

$$ | P_r(\frak L) | \le K L^{(3/2) + \epsilon} r^{-2}. $$

\end{theorem}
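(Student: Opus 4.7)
\medskip

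\noindent\textbf{Proof proposal.}

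The plan is to argue by induction on $L$, using \emph{low-degree} polynomial partitioning: partitioning at a fixed constant degree $D = D(\epsilon)$ rather than at the $L$-dependent degree used to prove Theorem \ref{gkthm}. The constants $D$ and $K$ are chosen at the end, and the base case of small $L$ is absorbed into a large $K$.

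For the inductive step, apply polynomial partitioning to the point set $P_r(\mathfrak{L})$: produce a polynomial $P$ of degree at most $D$ whose zero set $Z = Z(P)$ cuts $\RR^3 \setminus Z$ into $O(D^3)$ open cells $\Omega_i$, each containing at most $O(|P_r|/D^3)$ rich points, and write
\[
|P_r(\mathfrak{L})| \le |P_r(\mathfrak{L}) \cap Z| + \sum_i |P_r(\mathfrak{L}) \cap \Omega_i|.
\]
For the cellular sum, let $\mathfrak{L}_i$ consist of the lines meeting $\Omega_i$ and $L_i := |\mathfrak{L}_i|$. A line not contained in $Z$ meets at most $D+1$ cells, so $\sum_i L_i \le (D+1)L$. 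Every $r$-rich point of $\mathfrak{L}$ inside $\Omega_i$ is $r$-rich for $\mathfrak{L}_i$, so the plan is to apply the inductive hypothesis cell by cell. The subtle point is that the global non-degeneracy hypothesis at scale $L$ does not immediately supply the $< L_i^{1/2+\epsilon}$ local hypothesis the induction wants at scale $L_i$; I would handle this by formulating the induction with a two-parameter statement (keeping the absolute bound $L^{1/2+\epsilon}$ fixed while $L$ shrinks), or by treating separately the $O(D)$ ``heavy'' cells in which $L_i$ is comparable to $L$. After summing, the cellular total should come in at $\le K L^{3/2+\epsilon} r^{-2} D^{-c\epsilon}$ for some absolute $c>0$, which is below $\tfrac12 K L^{3/2+\epsilon} r^{-2}$ once $D$ is large.

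For the surface contribution, factor $P = \prod_j P_j$ into irreducibles; the hypothesis says $< L^{1/2+\epsilon}$ lines of $\mathfrak{L}$ lie on each $Z(P_j)$. Split $p \in P_r \cap Z$ into \emph{pointy} (at least $r/2$ lines through $p$ are not contained in $Z$) and \emph{flat} (at least $r/2$ lines through $p$ lie inside $Z$). The pointy count is at most $(2/r)\sum_{\ell \not\subset Z}|\ell \cap Z| \le 2LD/r$ by B\'ezout, which is absorbed into the target bound when $r \le 2L^{1/2}$ and $K$ is large relative to $D$. Flat points are $(r/2)$-rich points of the $\le D \cdot L^{1/2+\epsilon}$ lines contained in $Z$; on each irreducible component I would reduce to the planar Szemer\'edi--Trotter bound $\lesssim n^2/r^3 + n/r$ (adapted to a bounded-degree irreducible surface via a generic projection or fibration argument, with ruled components handled separately) applied with $n < L^{1/2+\epsilon}$, giving $O(D L^{1+2\epsilon}/r^3 + D L^{1/2+\epsilon}/r)$ --- comfortably below the target for $2 \le r \le 2L^{1/2}$.

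The main obstacle I anticipate is the cellular step: matching the global non-degeneracy hypothesis to one the induction can apply at smaller scales, and retaining enough balance in the $L_i$'s to make the $\sum L_i^{3/2+\epsilon}$ estimate close. The surface step is technical but essentially reduces to planar Szemer\'edi--Trotter on each irreducible piece, combined with a B\'ezout argument for the transverse incidences. Choosing $D(\epsilon)$ large enough that $D^{-c\epsilon} < 1/4$, and then $K(\epsilon)$ large enough to absorb constants and the base case, closes the induction.
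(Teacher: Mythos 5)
You have the right scaffolding (constant-degree partitioning, induction on $L$, B\'ezout for the lines crossing $Z(P)$, Szemer\'edi--Trotter for the lines inside it), and you correctly isolate the crux: a cell's line set $\frak L_i$ inherits only the absolute bound of $L^{(1/2)+\epsilon}$ lines per low-degree surface, not the relative bound $|\frak L_i|^{(1/2)+\epsilon}$ that the inductive hypothesis needs. But neither of your proposed remedies repairs this, so the proposal has a genuine gap at its central step. A two-parameter induction that keeps the threshold $L^{(1/2)+\epsilon}$ fixed while the number of lines $L'$ shrinks cannot keep the conclusion $K(L')^{(3/2)+\epsilon}r^{-2}$: the recursion must descend to small scales (each partitioning step only shrinks the line count by a factor $\sim D^{-2}$, and the base case has $L'$ bounded by a constant), and once $L'\le L^{(1/2)+\epsilon}$ the fixed threshold permits all $L'$ lines to be coplanar, giving $\sim (L')^{2}$ two-rich points, which exceeds $K(L')^{(3/2)+\epsilon}2^{-2}$ as soon as $(L')^{(1/2)-\epsilon}\ge K$ --- a range the induction passes through when $L$ is large. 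So the conclusion of any such two-parameter statement would have to carry extra terms recording the clustering of lines into surfaces, and formulating that and making it close is exactly the missing work. Your alternative of treating separately the $O(D)$ cells with $L_i\sim L$ does not touch the problem, because the mismatch between $L^{(1/2)+\epsilon}$ and $L_i^{(1/2)+\epsilon}$ occurs in every cell, including the typical ones with $L_i\sim D^{-2}L$.

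The paper resolves this not by adjusting the hypothesis but by discarding it: it proves the stronger, unconditional Theorem \ref{clusterinZr}, which for an arbitrary set of $L$ lines produces at most $2L^{(1/2)-\epsilon}$ irreducible surfaces of degree at most $D$, each containing at least $L^{(1/2)+\epsilon}$ lines, such that outside the $r'$-rich points of these surfaces (with $r'=\lceil(9/10)r\rceil$) there remain at most $KL^{(3/2)+\epsilon}r^{-2}$ points of $P_r(\frak L)$; Theorem \ref{mainincid} is the special case in which that family is forced to be empty. Closing this induction requires ingredients absent from your sketch: the counting Lemma \ref{surfcountr} (a B\'ezout argument showing that surfaces each containing $A>2DL^{1/2}$ lines number at most $2LA^{-1}$), which bounds $|\cZ|$; a pruning step in which the surfaces with fewer than $L^{(1/2)+\epsilon}$ lines are thrown away and their $r'$-rich points are absorbed by a dyadic Szemer\'edi--Trotter computation; and an $O(\log L)$-fold repetition of the partitioning to handle the bad cells, which at a single step can contain a constant fraction of the rich points and, in the stronger statement, cannot simply be absorbed into the left-hand side. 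Your top-level estimate for $P_r(\frak L)\cap Z(P)$ is essentially the paper's (and the reduction to planar Szemer\'edi--Trotter is unnecessary, since Theorem \ref{szemtrot} applies to lines in $\RR^n$), but as written the proposal stops exactly where the new work of this paper begins.
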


Using Theorem \ref{mainincid} in place of Theorem \ref{gkthm} in the arguments of \cite{GK2}, one gets the following slightly weaker distinct distance estimate.

\begin{theorem} \label{dd1-eps} For any $\epsilon > 0$, there is a constant $c_\epsilon > 0$ so that any set of $N$ points in the plane determines at least $c_\epsilon N^{1 - \epsilon}$ distinct distances.
\end{theorem}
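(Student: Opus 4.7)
The plan is to carry out the Elekes--Sharir reduction as in \cite{ES,GK2}, using Theorem~\ref{mainincid} in place of Theorem~\ref{gkthm}. Given $P \subset \RR^2$ with $|P|=N$, let $d(P)$ be the number of distinct distances and let $Q(P)$ be the number of quadruples $(a,b,c,d) \in P^4$ with $|ab|=|cd|$. Grouping ordered pairs by distance value and applying Cauchy--Schwarz gives $Q(P) \ge N^4/d(P)$, so it suffices to prove $Q(P) \le C_\epsilon N^{3+\epsilon}$.

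To cast this as an incidence problem I would apply the Elekes--Sharir transform: to each ordered pair $(a,b) \in P \times P$ with $a \ne b$, associate the one-parameter family $\ell_{a,b}$ of orientation-preserving rigid motions of $\RR^2$ sending $a$ to $b$. Under the parameterization of \cite{GK2}, each $\ell_{a,b}$ is a straight line in $\RR^3$, yielding a set $\mathfrak{L}$ of $L \sim N^2$ lines. Writing $m(g) = |\{a \in P : g(a) \in P\}|$, which equals the number of lines of $\mathfrak{L}$ through the rigid motion $g$, each non-degenerate quadruple determines a unique $g$ with $m(g)\ge 2$ (namely $g(a)=c$, $g(b)=d$), while each such $g$ contributes $m(g)(m(g)-1)$ quadruples. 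Since $m(g) \le N \le 2L^{1/2}$, a dyadic decomposition over multiplicities gives
$$ Q(P) \lesssim N^2 + \log N \cdot \sup_{2 \le r \le 2L^{1/2}} r^2 |P_r(\mathfrak{L})|. $$

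Fixing $\epsilon' = \epsilon/10$ and $D = D(\epsilon')$, and assuming the non-degeneracy hypothesis of Theorem~\ref{mainincid} for these parameters, each term in the supremum is at most $K L^{3/2+\epsilon'} \sim N^{3+2\epsilon'}$. Combined with the Cauchy--Schwarz lower bound on $Q(P)$, this yields $d(P) \ge c_\epsilon N^{1-\epsilon}$ after relabeling. The main obstacle is verifying the non-degeneracy itself: showing that no irreducible surface of degree at most $D$ contains more than $L^{1/2+\epsilon'}$ of the lines $\ell_{a,b}$. For planes and reguli this is exactly the analysis performed in \cite{GK2}, which shows that many coplanar or co-ruled Elekes--Sharir lines force $P$ to contain a large subset invariant under a rigid motion, a configuration that may be removed by a pigeonhole reduction before running the main argument. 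For general irreducible surfaces of degree at most $D$, one appeals to the dichotomy that such a surface either contains at most $O(D^2)$ lines, which is harmlessly absorbed into the $L^{\epsilon'}$ slack once $D$ is fixed, or else is ruled; in the ruled case the $\ell_{a,b}$ lying in the surface form a one-parameter subfamily to which structural arguments analogous to those for reguli in \cite{GK2} apply. Feeding the resulting line bound into Theorem~\ref{mainincid} then yields Theorem~\ref{dd1-eps}.
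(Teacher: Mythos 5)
Your reduction to an incidence bound is the same as the paper's (Cauchy--Schwarz to reduce to $Q(P)$, the Elekes--Sharir lines in $\RR^3$, and a sum over $r$-rich points fed into Theorem~\ref{mainincid}; your $N^2$ term should really be $N^3$ to account for the translation/parallel quadruples, but this is harmless since $N^3 \le N^{3+\epsilon}$). The genuine gap is exactly at the point you flag as ``the main obstacle'': verifying that no irreducible surface of degree at most $D$ contains $L^{1/2+\epsilon'} \sim N^{1+2\epsilon'}$ of the lines $\ell_{a,b}$. This is the main new content of the paper's proof (Lemma~\ref{noncluster}), and your proposal replaces it with an analogy rather than an argument. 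Citing \cite{GK2} only covers planes and reguli; Theorem~\ref{mainincid} demands the bound for \emph{every} irreducible surface of degree up to $D$. Your dichotomy (few lines versus ruled) correctly disposes of the non-ruled surfaces, but for ruled surfaces of degree between $3$ and $D$ the assertion that ``structural arguments analogous to those for reguli apply'' is not a proof: the regulus analysis in \cite{GK2} exploits the specific doubly-ruled quadric structure and does not extend formally to higher-degree ruled surfaces. Likewise, the suggested ``pigeonhole removal'' of a rigid-motion-invariant subset is not how the hypothesis is verified and would be delicate, since deleting points changes $\frak L$ and $Q(P)$ and the hypothesis must hold for all surfaces simultaneously.

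What the paper actually does is prove, uniformly for every irreducible $Q$ with $1 < \Deg Q \le D$, that $Z(Q)$ contains at most $C_D N$ lines of $\frak L(P)$ (and planes contain at most $N$, since the lines $\{\ell_{p,q}\}_q$ with a fixed first point $p$ are pairwise skew). The mechanism is algebraic, not a case analysis by ruledness: for each $p$ there is a nonvanishing vector field $V_p(x)$, polynomial of degree $\le 2$ in $x$ and degree $1$ in $p$, whose integral curves are exactly the lines $\{\ell_{p,q}\}_q$. If $Z(Q)$ contained at least $2D^2$ lines of this family, B\'ezout (Theorem~\ref{bezoutlines}) forces $Q \mid V_p\cdot\nabla Q$, so $V_p$ is tangent to $Z(Q)$; linearity in $p$ then shows that if this happened for two distinct base points $p_1,p_2$, a whole pencil of lines through each smooth point of $Z(Q)$ would lie in $Z(Q)$, forcing $Z(Q)$ to be a plane (the all-singular case being handled by a separate B\'ezout argument). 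Hence at most one exceptional $p$, giving $\le (2D^2+1)N \ll N^{1+2\epsilon'}$ lines per surface. Without this lemma (or a genuine substitute for ruled surfaces of degree up to $D$), your proof of Theorem~\ref{dd1-eps} does not close.
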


Polynomial partitioning is one of the main new ideas in \cite{GK2}, and it will also be the key tool in our proof.
We recall the statement of the partitioning theorem.

\begin{theorem} \label{polyhamintro} (Theorem 4.1 in \cite{GK2}) For each dimension $n$ and each degree $D \ge 1$, the following holds.  For any finite set $S \subset \RR^n$, we can find a non-zero polynomial $P$ of degree at  most $D$ so that $\RR^n \setminus Z(P)$ is a union of disjoint open sets $O_i$, and for each of these sets,

$$ |S \cap O_i | \le C_n D^{-n} |S|.$$

\end{theorem}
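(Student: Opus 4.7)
The plan is to iterate the polynomial ham sandwich theorem. First I would recall the polynomial ham sandwich lemma: given finite subsets $S_1, \ldots, S_k \subset \RR^n$, there is a non-zero polynomial $P$ of degree at most $C_n k^{1/n}$ with at most half of each $S_i$ in $\{P > 0\}$ and at most half in $\{P < 0\}$. This is standard from Borsuk--Ulam via the Veronese embedding $V_d : \RR^n \to \RR^M$, $M = \binom{n+d}{n}$, sending $x$ to the vector of all monomials of degree $\le d$: affine hyperplanes in $\RR^M$ pull back to zero sets of polynomials of degree $\le d$ on $\RR^n$, and the classical ham sandwich theorem bisects any $M$ masses in $\RR^M$ by an affine hyperplane, so choosing $d$ with $\binom{n+d}{n} \ge k$ gives $d = O_n(k^{1/n})$.

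Next I would iterate. Set $Q_0 \equiv 1$. Assuming inductively that every connected component of $\RR^n \setminus Z(Q_{j-1})$ meets $S$ in at most $2^{-(j-1)}|S|$ points, call a component \emph{heavy} if it contains more than $2^{-j}|S|$ points. Since the components are disjoint and jointly hold at most $|S|$ points, there are at most $2^j$ heavy components. Applying ham sandwich to $S$ restricted to each heavy component produces a polynomial $P_j$ of degree at most $C_n 2^{j/n}$ bisecting all of them simultaneously; set $Q_j := Q_{j-1} P_j$. A component of $\RR^n \setminus Z(Q_j)$ lies inside some component $O$ of $\RR^n \setminus Z(Q_{j-1})$ and inside exactly one sign set of $P_j$: if $O$ was heavy, the bisection halves the count to at most $2^{-j}|S|$; if $O$ was light, the count was already at most $2^{-j}|S|$.

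Finally, a degree computation closes the argument. The cumulative degree $\sum_{i=1}^{j} C_n 2^{i/n}$ is a geometric series of ratio $2^{1/n} > 1$, hence is bounded by $C_n' 2^{j/n}$. Choosing $j$ maximal with $C_n' 2^{j/n} \le D$ yields $2^j \ge c_n D^n$, so each open cell of $\RR^n \setminus Z(Q_j)$ meets $S$ in at most $2^{-j}|S| \le C_n'' D^{-n}|S|$ points, as required. The only genuine obstacle is the ham sandwich input, for which Borsuk--Ulam is essential; everything else is bookkeeping. One subtlety to watch is that at each stage one should bisect only the \emph{heavy} cells---otherwise the degree of $P_j$ would be governed by the total cell count rather than by $2^j$, and the cumulative degree budget would blow up.
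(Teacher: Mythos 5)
The paper never proves this statement itself: it is imported verbatim from \cite{GK2} (Theorem 4.1), described only as a corollary of the Stone--Tukey ham sandwich theorem, and the paper's own argument (in Theorem \ref{polyham}) addresses only the additional bound on the number of cells. Your argument is essentially the standard Guth--Katz proof that is being cited --- discrete polynomial ham sandwich obtained from Borsuk--Ulam and the Veronese embedding, iterated, with a geometric-series degree count --- and it is correct. The one genuine variation is that you bisect connected components of $\RR^n \setminus Z(Q_{j-1})$, and only the heavy ones, whereas \cite{GK2} works with sign-condition cells of the tuple $(P_1,\dots,P_j)$, of which there are automatically at most $2^j$; your heavy-cell counting achieves the same bound on the number of sets to be bisected without needing any bound on the total number of connected components, so it is a slightly more self-contained piece of bookkeeping. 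Two small caveats: the ham sandwich input for finite point sets must be taken in the ``at most half of each set in each open half-space'' form (or via a perturbation/limiting argument), which is how you stated it; and when $D$ is below the constant threshold so that no admissible $j \ge 1$ exists, one simply takes $P$ of degree $1$ and absorbs the loss into $C_n$. Finally, your closing remark slightly overstates the danger of bisecting all cells: by the Milnor--Thom bound the total number of components of $\RR^n\setminus Z(Q_{j-1})$ is itself $O_n(2^j)$, so the degree budget would not blow up --- the real payoff of restricting to heavy cells is that it avoids invoking that extra result.
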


This polynomial partitioning result is a corollary of the Stone-Tukey ham sandwich theorem \cite{ST}.  Polynomial partitioning is useful in divide and conquer arguments.  The set $S$ is divided into a part in each cell $O_i$ plus a part in a lower-dimensional surface $Z(P)$.  In a divide and conquer argument, we estimate each of these contributions separately and then add up the results.  

Kaplan, Matou\u{s}ek, and Sharir wrote a paper \cite{KMS} on the polynomial partitioning technique.  They give a good exposition of the topic.  They show how to use polynomial partitioning to give new proofs of some classical results in incidence geometry, such as the Szemer\'edi-Trotter theorem.  They also discuss how polynomial partitioning compares with other partitioning methods, such as the cutting method (see Section 2.3 of \cite{KMS}).  

The arguments of \cite{GK2} use polynomial partitioning with degree $D$ equal to a power of $L$.  This gives good bounds on what happens in the cells $O_i$, but it also makes $Z(P)$ rather complicated.  In \cite{SolTao}, Solymosi and Tao gave a modification of this argument using partitioning with degree $D$ equal to a large constant, and using induction to control what happens in each cell.  In \cite{SS}, Sharir and Solomon further developed this method, proving estimates for lines in $\RR^4$.  We will use this low degree partitioning method to prove Theorem \ref{mainincid}.  

Here is the main new issue that arises in the proof of Theorem \ref{mainincid}.  Recall that we use a low degree polynomial to partition $\RR^3$ into cells $O_i$, and we plan to use induction to study the behavior of the lines entering each cell.  Let $\frak L_i$ denote the lines of $\frak L$ that intersect the cell $O_i$.  By hypothesis, we know that $\frak L$ contains less than $| \frak L|^{(1/2) + \epsilon}$ lines in any low degree surface.  Since $\frak L_i \subset \frak L$, $\frak L_i$ contains less than $| \frak L|^{(1/2) + \epsilon}$ lines in any low degree surface.  But that doesn't mean that $\frak L_i$ contains less than $| \frak L_i|^{(1/2) + \epsilon}$ lines in any low degree surface.  Therefore, we cannot immediately apply induction to $\frak L_i$.  At first sight, the inductive argument doesn't look like it will close.  The main new ingredient in this paper is a way to organize the low degree surfaces containing many lines.  By keeping track of their contribution, we can make the induction close.

\vskip5pt

{\it Acknowledgements.} I would like to thank Nets Katz for many interesting discussions about these ideas over the last several years.  I would also like to thank the referee for helpful suggestions about the exposition.

\section{Background and notation}

Our proof is based on polynomial partitioning.  Here we restate the partitioning theorem with an extra condition bounding the number of cells $O_i$.

\begin{theorem} \label{polyham} For each dimension $n$ and each degree $D \ge 1$, the following holds.  For any finite set $S \subset \RR^n$, we can find a non-zero polynomial $P$ of degree at  most $D$ so that $\RR^n \setminus Z(P)$ is a union of disjoint open sets $O_i$ obeying the following:

\begin{itemize}

\item For each $i$, $|S \cap O_i | \le C_n D^{-n} |S|$.

\item The number of open sets $O_i$ is at most $C_n D^n$.  

\end{itemize}

\end{theorem}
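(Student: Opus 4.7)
The plan is to produce $P$ by applying Theorem \ref{polyhamintro} verbatim, which already gives the first bullet point $|S \cap O_i| \le C_n D^{-n} |S|$, with the $O_i$ being by construction the connected components of $\RR^n \setminus Z(P)$. All that remains is the new second bullet, which is a purely topological statement about real algebraic hypersurfaces: the number of connected components of the complement of a polynomial of degree at most $D$ in $\RR^n$ is bounded by $C_n D^n$.

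This is a classical estimate of Oleinik--Petrovsky, Milnor, and Thom. The stronger assertion is that the sum of Betti numbers of any real algebraic set in $\RR^n$ cut out by polynomials of degree $\le D$ is at most $C_n D^n$; in our situation we only need the $b_0$ estimate. A standard Morse-theoretic proof perturbs $P$ slightly to a polynomial $\tilde P$ of the same degree whose zero set is smooth, then counts critical points of a generic linear functional restricted to $Z(\tilde P)$. By Bezout, there are at most $D(D-1)^{n-1} = O(D^n)$ such points, and each bounded connected component of $\RR^n \setminus Z(\tilde P)$ must contribute a local extremum; the unbounded component contributes one more, and small perturbations preserve the component count. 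I expect to quote this result rather than reprove it.

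The main (and only mild) technical point is that $P$ as produced by Theorem \ref{polyhamintro} need not be squarefree, so $Z(P)$ may be nonreduced or singular. However, replacing $P$ by its squarefree part $P_{\mathrm{red}}$ leaves $Z(P)$ unchanged as a set and only lowers the degree, so the number of components of $\RR^n \setminus Z(P) = \RR^n \setminus Z(P_{\mathrm{red}})$ is bounded by $C_n (\deg P_{\mathrm{red}})^n \le C_n D^n$, as required. No step in this plan presents a serious obstacle: the theorem is essentially the combination of Theorem \ref{polyhamintro} with the Milnor--Thom component bound, applied to the squarefree part of the partitioning polynomial.
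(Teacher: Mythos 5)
Your argument is correct, but it is not the route the paper actually takes. You prove the stronger statement that the \emph{connected components} of $\RR^n \setminus Z(P)$ number at most $C_n D^n$, by citing the Oleinik--Petrovsky--Milnor--Thom bound (this is exactly the citation route the paper mentions in passing, with \cite{OP}, \cite{Mi}, \cite{Th} and Theorem A.1 of \cite{SolTao}); your worry about squarefreeness is harmless but also unnecessary, since the component count depends only on the zero set $Z(P)=Z(P_{\mathrm{red}})$ and the bound already applies to any polynomial of degree at most $D$. The paper, however, deliberately avoids appealing to these topological results: it observes that the statement of the theorem does not require the sets $O_i$ to be connected, so one may take the components $U_j$ produced by Theorem \ref{polyhamintro} and simply merge them into at most $C_n D^n$ open sets, each still containing at most $C_n D^{-n}|S|$ points of $S$ (after enlarging the constant). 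The trade-off is clear: your version yields genuinely connected cells, which is a cleaner and stronger conclusion, at the cost of importing the Milnor--Thom machinery (or a Morse-theoretic perturbation argument whose details you only sketch); the paper's grouping trick is elementary and self-contained, and is all that the later partitioning arguments need, since nothing downstream uses connectedness of the cells.
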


\begin{proof}  The first claim is Theorem 4.1 in \cite{GK2}.  So we just need to prove the second claim.

The number of connected components of the complement $\RR^n \setminus Z(P)$ is at most $C_n D^n$, by estimates proven independently by Oleinik-Petrovsky \cite{OP}, Milnor \cite{Mi}, and Thom \cite{Th}.  A short proof was also given by Solymosi and Tao, as Theorem A.1. in their paper \cite{SolTao}.  This implies the second claim.

However, we don't need to appeal to these results.  The statement of the theorem does not require that each open set $O_i$ is connected.  By Theorem 4.1 of \cite{GK2}, we can write $\RR^n \setminus Z(P)$ as a union of open sets $U_j$ with $|S \cap U_j | \le C_n D^{-n} |S|$.  We can then define each $O_i$ to be a union of some of the $U_j$ so that each $O_i$ contains $\le C_n D^{-n} |S|$ points of $S$ and the number of sets $O_i$ is at most $C_n D^n$. \end{proof}

We will also need a version of the B\'ezout theorem.  The simplest version of the B\'ezout theorem is the following.

\begin{theorem} \label{bezout} If $P, Q$ are non-zero polynomials in $\RR[x_1, x_2]$ with no common factor, then $Z(P) \cap Z(Q) \subset \RR^2$ contains at  most $(\Deg P) (\Deg Q)$ points.
\end{theorem}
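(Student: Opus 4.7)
My plan is to reduce the two-variable intersection-counting problem to counting roots of a univariate polynomial, using the resultant. View $P, Q$ as elements of $\RR[x_1][x_2]$ and let $R(x_1) = \operatorname{Res}_{x_2}(P, Q) \in \RR[x_1]$ denote the Sylvester resultant. The standard vanishing property of the resultant says that for each $a \in \RR$, if $(a, b) \in Z(P) \cap Z(Q)$ then $R(a) = 0$.

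First I would apply a generic rotation of $\RR^2$ so that, when $P$ and $Q$ are expanded as polynomials in $x_2$ with coefficients in $\RR[x_1]$, their leading coefficients are nonzero constants; in particular $\deg_{x_2} P = \Deg P$ and $\deg_{x_2} Q = \Deg Q$. I would then invoke two basic facts: (i) since $P$ and $Q$ have no common factor in $\RR[x_1, x_2]$, Gauss's lemma upgrades this to coprimality in $\RR(x_1)[x_2]$, and the standard resultant criterion then gives $R \not\equiv 0$; (ii) $\deg R \le (\Deg P)(\Deg Q)$. From (i), $R$ has only finitely many roots, so $Z(P) \cap Z(Q)$ is finite. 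A further generic rotation then ensures that no two points of $Z(P) \cap Z(Q)$ share the same $x_1$-coordinate, so each intersection point projects to a distinct root of $R$, yielding $|Z(P) \cap Z(Q)| \le \deg R \le (\Deg P)(\Deg Q)$.

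The main obstacle is the degree bound (ii). The Sylvester matrix is $(\deg_{x_2} P + \deg_{x_2} Q) \times (\deg_{x_2} P + \deg_{x_2} Q)$ with entries that are polynomials in $x_1$ of various degrees, and one must argue that every term of the determinant expansion has total $x_1$-degree at most $(\Deg P)(\Deg Q)$. The cleanest route I know is to bihomogenize: replace $P$ and $Q$ by their homogenizations in $\RR[X_0, X_1, X_2]$ of degrees $\Deg P$ and $\Deg Q$, extend scalars to $\CC$, and quote the projective form of B\'ezout's theorem on $\CC\mathbb{P}^2$, which is proved by standard intersection theory. The original real affine intersection injects into the complex projective one, so the bound $(\Deg P)(\Deg Q)$ transfers, and the coprimality hypothesis in $\RR[x_1,x_2]$ guarantees that the homogenizations have no common component (else dehomogenizing would produce a common factor downstairs).
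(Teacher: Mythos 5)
The paper never proves Theorem \ref{bezout} internally: it is quoted as classical, with pointers to Corollaries 2.3--2.4 of \cite{GK} and to van der Waerden, so there is no in-paper argument to match; your resultant outline is in fact the standard elementary route taken in the cited reference. The outline is sound: after a generic rotation making the $x_2$-leading coefficients of $P,Q$ nonzero constants, coprimality in $\RR[x_1,x_2]$ gives coprimality in $\RR(x_1)[x_2]$ by Gauss's lemma, so $R=\operatorname{Res}_{x_2}(P,Q)\not\equiv 0$, every intersection point projects to a root of $R$, and a genericity assumption makes this projection injective (minor touch-ups: use a single rotation achieving both genericity requirements, since a second rotation could spoil the constant-leading-coefficient normalization, and note that finiteness of each fiber uses that $P(a,\cdot)$ is not identically zero). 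The real content, however, is precisely the bound $\deg R\le(\Deg P)(\Deg Q)$, and you do not prove it: you bihomogenize and quote projective B\'ezout over $\mathbb{CP}^2$. That is logically admissible---and no worse than the paper's own citation---but it makes the resultant scaffolding redundant (projective B\'ezout applied directly already bounds the number of points) and it invokes a strictly stronger form of the statement at issue; you would also need to note that no common factor over $\RR$ implies none over $\CC$ (the gcd is unchanged under field extension) and that $X_0$ divides neither homogenization, so a common projective component really dehomogenizes to a common factor. If you want the argument self-contained, the missing step has a short classical proof: writing $P=\sum_i a_i(x_1)x_2^i$, $Q=\sum_j b_j(x_1)x_2^j$ with $\deg a_i\le \Deg P-i$, $\deg b_j\le \Deg Q-j$, the resultant is homogeneous of degree $\Deg Q$ in the $a_i$, of degree $\Deg P$ in the $b_j$, and isobaric of weight $(\Deg P)(\Deg Q)$ when $a_i,b_j$ carry weights $i,j$; hence every monomial of the Sylvester determinant has $x_1$-degree at most $(\Deg P)(\Deg Q)$, which is the bound you need.
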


We need a version of this theorem for polynomials in three variables where we count the number of lines in $Z(P) \cap Z(Q)$.

\begin{theorem} \label{bezoutlines} If $P, Q$ are non-zero polynomials in $\RR[x_1, x_2, x_3]$ with no common factor, then $Z(P) \cap Z(Q)$ contains at most $(\Deg P) (\Deg Q)$ lines.
\end{theorem}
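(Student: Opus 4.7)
I would prove Theorem \ref{bezoutlines} by reducing to the two-variable Bezout theorem (Theorem \ref{bezout}) via a generic plane section. The plan is to choose an affine plane $H \subset \RR^3$ with three properties: (i) the restrictions $P|_H, Q|_H \in \RR[y_1, y_2]$ are nonzero and coprime; (ii) every line $\ell \subset Z(P) \cap Z(Q)$ meets $H$ in exactly one point; (iii) no two such lines meet at a common point of $H$. Given such an $H$, Theorem \ref{bezout} yields $|Z(P|_H) \cap Z(Q|_H)| \le (\Deg P)(\Deg Q)$, and conditions (ii) and (iii) together exhibit each line of $Z(P) \cap Z(Q)$ as contributing a distinct point of this intersection, so the line count is bounded by $(\Deg P)(\Deg Q)$.

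\textbf{Finding $H$.} To produce $H$, I would argue by contradiction: assume $Z(P) \cap Z(Q)$ contains more than $(\Deg P)(\Deg Q)$ lines, and work with any finite subcollection $\ell_1, \dots, \ell_N$ of size $N > (\Deg P)(\Deg Q)$. The space of affine planes in $\RR^3$ is three-dimensional, and each of (ii) and (iii) excludes only a finite union of codimension-one subfamilies: planes containing or parallel to some $\ell_i$, and planes through an intersection point of some pair $\ell_i \cap \ell_j$. Generic $H$ therefore satisfies (ii) and (iii).

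\textbf{Main obstacle.} The delicate condition is (i), a Bertini-type statement that a generic plane restriction preserves coprimality. I would handle it via resultants: after a generic linear change of coordinates making both $P$ and $Q$ have positive degree in $x_3$, the coprime hypothesis gives $R(x_1, x_2) := \text{Res}_{x_3}(P, Q) \ne 0$ in $\RR[x_1, x_2]$; then for $H = \{x_3 = a x_1 + b x_2 + c\}$ with generic $(a,b,c)$, the corresponding restricted resultant of $P|_H$ and $Q|_H$ remains nonzero, forcing coprimality of the restrictions. An equivalent and perhaps more conceptual route is to homogenize $P, Q$ and invoke projective Bezout in $\CC \mathbb{P}^3$: coprimality is preserved under field extension and homogenization, the complete intersection $Z(\tilde P) \cap Z(\tilde Q)$ is a pure one-dimensional variety of total degree $(\Deg P)(\Deg Q)$, and each line is an irreducible component of degree one, so the line count is capped by the degree directly. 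This coprimality-under-restriction step is where I expect the main technical burden to lie; the transversality and distinctness conditions (ii) and (iii) are routine generic-position arguments.
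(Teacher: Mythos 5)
The paper does not actually prove Theorem \ref{bezoutlines}: it quotes it as a classical fact, pointing to \cite{GK} (Corollaries 2.3 and 2.4) and to Section 2 of \cite{EKS}, so the only question is whether your sketch stands on its own. Your overall plan (slice by a generic plane $H$ and apply Theorem \ref{bezout}) is reasonable, and conditions (ii) and (iii) are indeed routine; but the step you flag as the main obstacle, condition (i), is the entire content of the argument, and the mechanism you propose for it does not work as stated. The nonvanishing of $R=\operatorname{Res}_{x_3}(P,Q)$ is a statement about eliminating $x_3$ from $P$ and $Q$; the substitution $x_3=ax_1+bx_2+c$ does not commute with that elimination, so there is no ``restricted resultant'' of $P|_H$ and $Q|_H$ that $R\neq 0$ controls by specialization --- you never even say with respect to which variable a resultant of the two bivariate restrictions would be taken, and coprimality in two variables is not equivalent to nonvanishing of a single such resultant without caveats about leading coefficients and common factors free of the chosen variable. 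Note also the real/complex trap: a nonconstant common factor of $P|_H$ and $Q|_H$ may have an empty real zero set, so an honest proof of (i) must pass to $\CC$ (or argue purely algebraically); the natural argument complexifies, uses that $Z_\CC(P)\cap Z_\CC(Q)$ is a curve with finitely many irreducible components (this is where the no-common-factor hypothesis and some dimension theory enter), and shows that a plane with non-coprime restrictions must contain one of those components. That is a genuine Bertini-type argument, not a routine genericity remark. Your fallback --- homogenize and invoke Bezout with multiplicities in $\CC\mathbb{P}^3$ --- is correct, but it consists of citing a theorem at least as strong as the one being proved, which is in effect the same move the paper makes by deferring to the literature.

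If you want a self-contained argument in the resultant spirit you started with, the standard route (essentially the one in the references the paper cites, see Section 2 of \cite{EKS}) projects rather than slices, which bypasses the coprimality-of-restrictions issue entirely. Suppose $Z(P)\cap Z(Q)$ contained $N>(\Deg P)(\Deg Q)$ lines and fix $N$ of them. After a generic rotation, none of these lines is parallel to the $x_3$-axis, their projections to the $(x_1,x_2)$-plane are $N$ distinct lines, and the top-degree parts of $P$ and $Q$ do not vanish in the vertical direction, so $R:=\operatorname{Res}_{x_3}(P,Q)\in\RR[x_1,x_2]$ is a nonzero polynomial of degree at most $(\Deg P)(\Deg Q)$. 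Above every point of the projection of a common line, $P(x_1,x_2,\cdot)$ and $Q(x_1,x_2,\cdot)$ have a common root, so $R$ vanishes identically on each of the $N$ projected lines; each such line contributes a distinct linear factor of $R$, whence $N\le \Deg R\le(\Deg P)(\Deg Q)$, a contradiction. This keeps the proof at the level of one-variable resultants plus the fact that a polynomial vanishing identically on a line is divisible by its defining linear form.
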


Proofs of these classical results appear in \cite{GK}.  They are Corollaries 2.3 and 2.4.  See also Section 2 of \cite{EKS} for a proof of Theorem \ref{bezoutlines} and a review of related material.  A more general version of Theorem 1.2 can be found in van der Waerden's book {\it Modern Algebra}, \cite{VW}, Volume 2, page 16.  

We will also use the Szemer\'edi-Trotter theorem, which we record here in the following form:

\begin{theorem} \label{szemtrot} (\cite{SzTr}) If $\frak L$ is a set of $L$ lines in $\RR^n$, then 

$$ |P_r(\frak L) | \le C \left( L^2 r^{-3} + L r^{-1} \right). $$

\end{theorem}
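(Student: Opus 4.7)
The plan is to reduce from $\RR^n$ to the plane by a generic linear projection and then derive the $r$-rich-point bound from the classical planar Szemer\'edi--Trotter incidence estimate.

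First I would pass from $\RR^n$ to $\RR^2$. Let $P = P_r(\frak L) \subset \RR^n$ be the (finite) set of $r$-rich points, and pick a generic linear map $\pi : \RR^n \to \RR^2$ enjoying the following properties: (i) $\pi$ is injective on $P$, and (ii) $\pi$ sends distinct lines of $\frak L$ to distinct lines of $\RR^2$. Each failure mode is a polynomial condition on the entries of the matrix of $\pi$, and there are only finitely many pairs of points (and pairs of lines) in play, so a generic $\pi$ satisfies both requirements at once. Since linear maps automatically carry incidences to incidences, every image point $\pi(x)$ with $x \in P$ lies on at least $r$ distinct lines of $\pi(\frak L)$, so $r$-richness survives the projection.

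Next I would apply the standard planar Szemer\'edi--Trotter incidence bound
$$ I(P', L') \le C \bigl( |P'|^{2/3} |L'|^{2/3} + |P'| + |L'| \bigr) $$
with $P' = \pi(P)$ and $L' = \pi(\frak L)$. Each of the $|P|$ image points contributes at least $r$ incidences, giving
$$ r |P| \le C \bigl( |P|^{2/3} L^{2/3} + |P| + L \bigr). $$
A three-term case analysis finishes the argument: the first summand yields $|P| \le C' L^2 r^{-3}$; the third yields $|P| \le C' L r^{-1}$; and the middle summand forces $r \le C$, in which case the stated bound $C(L^2 r^{-3} + L r^{-1})$ absorbs $|P|$ trivially. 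Combining the surviving cases gives the desired estimate.

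The main thing to verify carefully is the genericity step, i.e., that for each pair of points (or lines) the maps $\pi$ collapsing them form a proper algebraic subvariety of the parameter space of linear maps $\RR^n \to \RR^2$; this is routine because each constraint is the non-vanishing of a polynomial in the matrix entries of $\pi$. Once in the plane, the Szemer\'edi--Trotter theorem itself admits a short polynomial partitioning proof (combining Theorem \ref{polyham} with the B\'ezout bound of Theorem \ref{bezout}), as carried out by Kaplan, Matou\u{s}ek, and Sharir in \cite{KMS}.
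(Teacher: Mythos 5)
Your proposal is correct, but note that the paper does not prove this theorem at all: it is quoted from \cite{SzTr} as a classical black box, with pointers to the proofs by cuttings \cite{CEGSW}, by the crossing number inequality \cite{Sz}, and by polynomial partitioning \cite{KMS}. Your reduction is the standard way to pass from the planar incidence formulation to the $r$-rich-point formulation in $\RR^n$, and it works: a generic linear projection $\pi:\RR^n\to\RR^2$ maps every line of $\frak L$ to a line, is injective on the finite set $P_r(\frak L)$, and keeps distinct lines distinct, so richness is preserved and the planar incidence bound applies to $\pi(P_r(\frak L))$ and $\pi(\frak L)$. Two small points should be made explicit. First, alongside your condition (ii) you need that no line of $\frak L$ is collapsed to a point, i.e.\ no direction vector of a line lies in $\ker \pi$; this is again a proper algebraic condition on the matrix entries of $\pi$, so genericity handles it, but it is logically prior to ``distinct lines go to distinct lines.'' Second, in the middle case of your trichotomy, where the term $|P|$ dominates and forces $r \le C$, the ``trivial absorption'' deserves one line of justification: for $r \ge 2$ every $r$-rich point is the intersection point of two distinct lines, so $|P_r(\frak L)| \le \binom{L}{2} \le L^2 \lesssim L^2 r^{-3}$ once $r$ is bounded by a constant (the statement is of course only meaningful for $r \ge 2$, since $P_1$ is infinite). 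With these remarks your argument is complete modulo the planar Szemer\'edi--Trotter theorem itself, which, as you note, admits a short proof via the partitioning method of \cite{KMS} using Theorem \ref{polyham} together with the B\'ezout bound of Theorem \ref{bezout}.
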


There are several nice proofs of the Szemer\'edi-Trotter theorem that have appeared since the original article.  In \cite{CEGSW}, Clarkson et al. gave a proof using the method of cuttings.  In \cite{Sz}, Sz\'ekely gave a proof using the crossing number lemma.  In \cite{KMS}, Kaplan, Matou\u{s}ek, and Sharir gave a proof using the polynomial partitioning theorem.  Their proof is closely related to the ideas in this paper.

We end with a note on constants.  We will use $C$ to denote a constant that may change from line to line.  If we want to label a particular constant to refer to later, we will call it $C_1, C_2,$ etc.

\section{A stronger result for inductive purposes}

We will prove Theorem \ref{mainincid} by induction.  To make the induction work, we prove a slightly stronger result.  The stronger result says that for any set of lines $\frak L$ in $\RR^3$, there is a small set of low degree surfaces that account for all but $\sim L^{(3/2) + \epsilon} r^{-2}$ of the $r$-rich points of $\frak L$.  

To state our theorem we need a piece of notation.  If $\frak L$ is a set of lines and $Z$ is an algebraic surface, we define $\frak L_Z \subset \frak L$ to be the set of lines of $\frak L$ that lie in $Z$.

\begin{theorem} \label{clusterinZr} For any $\epsilon > 0$, there are $D(\epsilon)$, and $K(\epsilon)$ so that the following holds.
For any $ r \ge 2$, let $r' = \lceil (9/10) r \rceil$, the least integer which is at least $(9/10) r$.  

If $\frak L$ is a set of $L$ lines in $\RR^3$, and if $2 \le r \le 2 L^{1/2}$, then there is a set $\mathcal{Z}$ of algebraic surfaces so that

\begin{itemize}

\item Each surface $Z \in \mathcal{Z}$ is an irreducible surface of degree at most $D$.

\item Each surface $Z \in \mathcal{Z}$ contains at least $L^{(1/2) + \epsilon}$ lines of $\frak L$.

\item $|\mathcal{Z}| \le 2 L^{(1/2) - \epsilon}$.

\item $| P_r(\frak L) \setminus \cup_{Z \in \mathcal{Z}} P_{r'}(\frak L_Z) | \le K L^{(3/2) + \epsilon} r^{-2}$.

\end{itemize}

\end{theorem}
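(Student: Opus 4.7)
The plan is to prove Theorem~\ref{clusterinZr} by induction on $L$. The base case handles $L$ below a threshold $L_0(\epsilon)$ by taking $\mathcal{Z}=\emptyset$ and applying Theorem~\ref{szemtrot} directly. For the inductive step I fix a constant $D=D(\epsilon)$, large enough that a gain factor of $D^{-2\epsilon}$ beats various overheads, apply Theorem~\ref{polyham} to $P_r(\frak L)$ at degree $D$, and split the rich points into a cellular part $\bigsqcup_i(P_r(\frak L)\cap O_i)$ and a boundary part $P_r(\frak L)\cap Z(P)$.

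For the cellular part, let $\frak L_i=\{\ell\in\frak L:\ell\cap O_i\ne\emptyset\}$ and $L_i=|\frak L_i|$. Every line meets at most $D+1$ cells so $\sum_i L_i\le(D+1)L$, and any rich point in $O_i$ is $r$-rich for $\frak L_i$. Modulo handling the edge case where $L_i=L$ (by iterating partitions or noting that it would force all lines into $Z(P)$), the inductive hypothesis for $\frak L_i$ yields $\mathcal{Z}_i$ of size $\le 2L_i^{1/2-\epsilon}$ and the bound $|P_r(\frak L_i)\setminus\bigcup_{Z\in\mathcal{Z}_i}P_{r'}(\frak L_{i,Z})|\le KL_i^{3/2+\epsilon}r^{-2}$. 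Convexity of $x\mapsto x^{3/2+\epsilon}$ combined with H\"older should yield $\sum_i L_i^{3/2+\epsilon}\lesssim L^{3/2+\epsilon}D^{-2\epsilon}$ and $\sum_i L_i^{1/2-\epsilon}\lesssim D^{2+2\epsilon}L^{1/2-\epsilon}$, making the direct cellular contribution at most $(1/4)KL^{3/2+\epsilon}r^{-2}$ when $D$ is large. For each $Z\in\bigcup_i\mathcal{Z}_i$ I then sort: if $|\frak L_Z|\ge L^{1/2+\epsilon}$, promote $Z$ to the final $\mathcal{Z}$; otherwise $|P_{r'}(\frak L_{i,Z})|\le|P_{r'}(\frak L_Z)|\lesssim L^{1+2\epsilon}r^{-3}+L^{1/2+\epsilon}r^{-1}$ by Theorem~\ref{szemtrot}, and the total over such ``false positive'' surfaces fits within target once $K$ is large relative to $D$.

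For the boundary, factor $P$ into irreducibles to produce at most $D$ irreducible components $Z_k$ of $Z(P)$, and promote into $\mathcal{Z}$ each $Z_k$ with $|\frak L_{Z_k}|\ge L^{1/2+\epsilon}$. Using the $r-r'\ge r/10$ slack, an $r$-rich $x\in Z(P)$ either has $\ge r'$ lines in $Z(P)$, or else at least $r/10$ of its lines meet $Z(P)$ transversally at $x$, in which case Theorem~\ref{bezoutlines} bounds the count by $\le 10LD/r$. In the former case, either some single $Z_k$ carries $\ge r'$ lines through $x$ (absorbed if big; otherwise Theorem~\ref{szemtrot} on the $<L^{1/2+\epsilon}$ lines of a small component, summed over $\le D$ components, stays within target), or none does. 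The bound $|\mathcal{Z}|\le 2L^{1/2-\epsilon}$ is a separate counting fact: the number $N$ of distinct irreducible surfaces of degree $\le D$ each containing $\ge L^{1/2+\epsilon}$ lines of $\frak L$ satisfies $N\le L^{1/2-\epsilon}$ for $L\ge L_0(\epsilon)$, by Cauchy-Schwarz on $(\text{line},\text{surface})$-incidences combined with the B\'ezout bound (Theorem~\ref{bezoutlines}) of $\le D^2$ shared lines per pair of distinct surfaces.

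The main obstacle is the last subcase on $Z(P)$: an $r$-rich $x$ with $\ge r'$ in-$Z(P)$ lines spread across $\ge 2$ components so that no single $Z_k$ carries $\ge r'$ of them. Such $x$ lies on some pairwise intersection $Z_k\cap Z_{k'}$, which is an algebraic curve containing at most $D^2$ lines of $\frak L$ by Theorem~\ref{bezoutlines}, and I would handle these rich points using the structure of these multi-component intersections, with only $O(D^2)$ pairs to consider. Threading the constants so that the cellular, false-positive, transverse, single-small-component, and pairwise-intersection contributions jointly fit under $KL^{3/2+\epsilon}r^{-2}$ while keeping $|\mathcal{Z}|\le 2L^{1/2-\epsilon}$ fixes the hierarchy: choose $D=D(\epsilon)$ first, then $K=K(\epsilon)$ large relative to $D$, and finally $L_0(\epsilon)$ large in both.
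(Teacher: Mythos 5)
The decisive gap is in your cellular estimate. Polynomial partitioning (Theorem \ref{polyham}) equidistributes the \emph{points} of the partitioned set among the cells; it gives no bound on the number of lines $L_i=|\frak L_i|$ meeting an individual cell beyond the trivial $L_i\le L$. Consequently the inequality you rely on, $\sum_i L_i^{3/2+\epsilon}\lesssim D^{-2\epsilon}L^{3/2+\epsilon}$, is false in general: since $x\mapsto x^{3/2+\epsilon}$ is convex, the constraint $\sum_i L_i\le (D+1)L$ only yields $\sum_i L_i^{3/2+\epsilon}\le(\max_i L_i)^{1/2+\epsilon}\sum_i L_i\le (D+1)L^{3/2+\epsilon}$, and nothing prevents a single cell (indeed up to $\sim D$ cells) from being met by essentially all $L$ lines, so the cellular contribution can be of size $\sim D\cdot K L^{3/2+\epsilon}r^{-2}$ --- a loss of $D$ rather than a gain of $D^{-2\epsilon}$ --- and the induction does not close. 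For such cells the hypothesis $|\frak L_i|\le L/2$ needed even to invoke induction can also fail, and your parenthetical fix is wrong: $L_i=L$ does not force the lines into $Z(P)$, since a line meeting an open cell is by definition not contained in $Z(P)$. This is precisely the difficulty the paper's argument is organized around: it declares a cell good when $|\frak L_i|\le\beta D^{-2}L$ (only then is the $D^{-2\epsilon}$ gain available), observes that the bad cells, while possibly line-heavy, contain at most a $1/100$ fraction of the partitioned point set $S$, and then iterates the entire construction $O(\log L)$ times on the leftover sets $S_{j+1}=S_j\setminus\bigcup_{Z\in\tilde{\mathcal{Z}}_{S_j}}P_{r'}(\frak L_Z)$. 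The price is a preliminary family $\tilde{\mathcal{Z}}$ of size $\Poly(D)L^{1/2-\epsilon}\log L$, which is afterwards pruned to the surfaces containing at least $L^{1/2+\epsilon}$ lines, the discarded surfaces being controlled by a dyadic decomposition in $|\frak L_Z|$, Szemer\'edi--Trotter on each surface, and the B\'ezout counting lemma (Lemma \ref{surfcountr}). Your proposal contains the pruning and counting ingredients, but without the good/bad dichotomy and the iteration (or some other mechanism for cells met by many lines) the cellular step collapses.

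By contrast, the boundary subcase you single out as ``the main obstacle'' is not an obstacle, although as written your treatment of it is an unfinished sketch. If $x\in S\cap Z(P)$ fails to be $r'$-rich for $\frak L_{Z_j}$ for \emph{every} irreducible component $Z_j$, fix any component $Z_j$ containing $x$: at least $r-r'\ge r/10$ of the lines through $x$ are not contained in $Z_j$ (it is irrelevant that some of them may lie in other components), and each such line meets $Z_j$ in at most $\Deg Z_j$ points, so there are at most $10(\Deg Z_j)Lr^{-1}$ such points per component and at most $10DLr^{-1}\le 20DL^{3/2}r^{-2}$ in total, absorbed by taking $K$ large relative to $D$; no analysis of the curves $Z_k\cap Z_{k'}$ is needed. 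Two smaller remarks: in a cell with $r>2|\frak L_i|^{1/2}$ the inductive hypothesis does not apply and one should instead use the trivial bound of Proposition \ref{bigr}; and your Cauchy--Schwarz count giving $|\mathcal{Z}|\le 2L^{1/2-\epsilon}$ is sound and is essentially Lemma \ref{surfcountr}, but it requires $L^{\epsilon}>2D$, which is why the base case of the induction must cover all $L$ with $L^{\epsilon}\le 2D$.
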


Theorem \ref{clusterinZr} implies Theorem \ref{mainincid}.  If there are less than $L^{(1/2) + \epsilon}$ lines of $\frak L$ in any irreducible algebraic surface of degree at most $D$, then the set $\mathcal{Z}$ must be empty, and so Theorem \ref{clusterinZr} implies that $|P_r(\frak L)| \le K L^{(3/2) + \epsilon} r^{-2}$.  

In our theorems above, we always assumed that $r \le 2 L^{1/2}$.  Studying $r$-rich points for $r > 2 L^{1/2}$ is much simpler.  We recall the following elementary estimate, which will also be useful in our proof.

\begin{prop} \label{bigr}
If $\frak L$ is a set of $L$ lines in $\RR^d$ for $d \ge 2$, and if $r > 2 L^{1/2}$, then $|P_r(\frak L)| \le 2 L r^{-1}$.
\end{prop}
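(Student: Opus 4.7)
The plan is to use a standard Cauchy-Schwarz incidence bound of Kővári--Sós--Turán type. The only geometric input needed is that two distinct points in $\RR^d$ lie on at most one common line, so the argument goes through in every dimension $d \ge 2$.

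Write $N = |P_r(\frak L)|$. For each line $\ell \in \frak L$ let $I_\ell$ denote the number of $r$-rich points on $\ell$, and set $I = \sum_{\ell \in \frak L} I_\ell$. Since every rich point lies on at least $r$ lines of $\frak L$, counting incidences from the point side gives $I \ge rN$. Counting unordered pairs of rich points on a common line, the $K_{2,2}$-free property yields
$$\sum_{\ell \in \frak L} \binom{I_\ell}{2} \le \binom{N}{2},$$
so $\sum_{\ell} I_\ell^2 \le N^2 + I$. Cauchy--Schwarz then gives
$$I^2 = \Bigl(\sum_{\ell \in \frak L} I_\ell\Bigr)^2 \le L \sum_{\ell \in \frak L} I_\ell^2 \le L N^2 + L I.$$

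Solving this quadratic inequality in $I$ and using $\sqrt{a+b} \le \sqrt{a} + \sqrt{b}$ yields $I \le L + L^{1/2} N$. Combining with $I \ge rN$ produces $(r - L^{1/2}) N \le L$. The hypothesis $r > 2 L^{1/2}$ gives $L^{1/2} < r/2$, hence $r - L^{1/2} > r/2$, and dividing through yields $N \le 2L/r$, which is the claim.

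There is no real obstacle here; the entire argument is a short double-counting computation. The only mild points to be careful about are the quadratic-inequality step (bounding $\sqrt{L^2 + 4 L N^2}$ by $L + 2 L^{1/2} N$ term by term) and the observation that the hypothesis $r > 2L^{1/2}$ is exactly strong enough to convert the bound $N \le L/(r - L^{1/2})$ into the claimed $N \le 2L/r$.
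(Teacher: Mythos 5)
Your argument is correct: the incidence count $I \ge rN$, the $K_{2,2}$-free bound $\sum_\ell \binom{I_\ell}{2} \le \binom{N}{2}$, the Cauchy--Schwarz step $I^2 \le L(N^2 + I)$, and the resolution of the quadratic giving $I \le L + L^{1/2}N$ all check out, and the hypothesis $r > 2L^{1/2}$ indeed converts $(r - L^{1/2})N \le L$ into $N \le 2Lr^{-1}$. However, this is a genuinely different route from the paper. The paper's proof is a greedy sequential count: order the rich points $x_1, \dots, x_M$, observe that $x_j$ lies on at least $r-(j-1)$ lines containing none of the earlier points (since two points determine at most one line), sum to get $L \ge \sum_{j} \max(r-j,0)$, and conclude first that $M < r/2$ and then that $L \ge M r/2$. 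Both proofs use the same geometric input (two distinct points lie on at most one common line), but yours proves in passing the standard K\H{o}v\'ari--S\'os--Tur\'an incidence bound $I \le L + L^{1/2}N$, which is more general and gives the slightly sharper $N \le L/(r - L^{1/2})$; the paper's version is shorter, avoids Cauchy--Schwarz, and is deliberately structured as a template that is reused later in Lemma \ref{surfcountr}, where the same greedy count (with B\'ezout's theorem bounding pairwise intersections by $D^2$ lines instead of one point) bounds the number of surfaces each containing many lines, a setting where your pair-counting formulation would need to be adapted since two surfaces can share up to $D^2$ lines rather than one.
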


We include the well-known proof here, because it is a model for a different proof below.  

\begin{proof} Let $P_r(\frak L)$ be $\{ x_1, x_2, .., x_M \}$, with $M = |P_r(\frak L)|$.  Now $x_1$ lies in at least $r$ lines of $\frak L$.  The point $x_2$ lies in at least $(r-1)$ lines of $\frak L$ that did not contain $x_1$.  More generally, the point $x_j$ lies in at least $r- (j-1)$ lines of $\frak L$ that did not contain any of the previous points $x_1, ..., x_{j-1}$.  Therefore, we have the following inequality for the total number of lines:

$$ L \ge \sum_{j=1}^M \max( r - j, 0). $$

If $M \ge r/2$, then we would get $L \ge (r/2) (r/2) = r^2/4$.  But by hypothesis, $r > 2 L^{1/2}$, giving a contradiction.  Therefore, $M < r/2$, and we get $L \ge M (r/2)$ which proves the proposition.  \end{proof}

\section{Proof of Theorem \ref{clusterinZr}}

Here is an outline of our proof.  We will use induction on the number of lines in $\frak L$.

First, we use a low degree polynomial partitioning argument to cut $\RR^3$ into cells $O_i$.  For each cell, we use induction to study the lines of $\frak L$ that enter that cell.  For each cell, we get a set of surfaces $\mathcal{Z}_i$ that accounts for all but a few of the $r$-rich points in $O_i$.  Combining these surfaces with the polynomial partitioning surface, we will get a large set of surfaces $\tilde {\mathcal{Z}}$ with the following properties:

\begin{itemize}

\item Each surface $Z \in \tilde {\mathcal{Z}}$ is an irreducible algebraic surface of degree at most $D$.

\item $| \tilde{\cZ} | \le \Poly(D) L^{(1/2) - \epsilon} \log L$.

\item $| P_r(\frak L) \setminus \cup_{Z \in \tilde{\mathcal{Z}}} P_{r'}(\frak L_Z) | \le (1/100) K L^{(3/2) + \epsilon} r^{-2}$.

\end{itemize}  

(We write $A \le \Poly(D) B$ to mean that is an exponent $p$ and a constant $C$ so that $A \le C D^p B$.)

This set of surfaces $\tilde {\cZ}$ does not close the induction.  There are too many surfaces in $\tilde{\cZ}$, and we don't know that each surface contains $L^{(1/2) + \epsilon}$ lines of $\frak L$.  The second step is to prune $\tcZ$.  We will define

$$ \cZ := \{ Z \in \tcZ | Z \textrm{ contains at least } L^{(1/2) + \epsilon} \textrm{ lines of } \frak L \}. $$

\noindent Then we will check that $\cZ $ satisfies the conclusions of the theorem.  First, we will prove that $|\cZ| \le 2 L^{(1/2) - \epsilon}$.  This follows from a simple counting argument, similar to the proof of Proposition \ref{bigr} above.  
Second, we will check that the surfaces in $\tcZ \setminus \cZ$ did not contribute too much to controlling the $r$-rich points of $\frak L$.  More precisely we will prove that

$$ \sum_{Z \in \tcZ \setminus \cZ}  |P_{r'}( \frak L_Z) | \le (1/100) K L^{(3/2) + \epsilon} r^{-2}. $$

\noindent To prove this bound, we use Szemer\'edi-Trotter to bound the size of $P_{r'}(\frak L_Z)$ in terms of $|\frak L_Z|$ for each surface $Z \in \tcZ \setminus \cZ$, and we use a simple counting argument to control how many surfaces $Z$ have large $|\frak L_Z|$.  This finishes our outline.  Now we begin the proof of Theorem \ref{clusterinZr}.   

We remark that if $\epsilon \ge 1/2$ then the theorem is trivial: we can take $\cZ$ to be empty, and it is easy to check that $|P_r(\frak L)| \le 2 L^2 r^{-2}$.  (This follows from Szemer\'edi-Trotter, which gives a stronger estimate.  But it also follows from a simple double-counting argument.)  So we can assume that $\epsilon \le 1/2$.  

We start by discussing how to choose $D = D(\eps)$ and $K = K(\eps)$.  We will choose $D$ a large constant depending on $\epsilon$ and then we will choose $K$ a large constant depending on $\epsilon$ and $D$.  As long as these are large enough at certain points in the proof, the argument goes through.  For example, we will choose $K$ large enough that

\begin{equation} \label{klarge}
K \ge 10 (2 D)^{2 / \eps}. 
\end{equation}

The proof is by induction on $L$.  We start by checking the base of the induction.  Because of equation \ref{klarge}, we claim the theorem holds when $L^\eps \le 2D$.  Suppose that $\frak L$ is a set of $L$ lines with $L^\eps \le 2 D$, and that $2 \le r \le 2 L^{1/2}$.  We choose $\cZ$ to be the empty set.  Using equation \ref{klarge}, we see that

$$ |P_r (\frak L)| \le L^2 \le (2D)^{2/\eps} \le K / 10 \le K L^{(3/2) + \eps} r^{-2}. $$

We have now established the base of the induction.  By the inductive hypothesis, we can assume that the theorem holds for sets of at most $L/2$ lines.

\subsection{Building $\tcZ$}

Let $S$ be any subset of $P_r(\frak L)$.  An important case is $S = P_r(\frak L)$, but we will have to consider other sets as well.  We use Theorem \ref{polyham} to do a polynomial partitioning of the set $S$ with a polynomial of degree at most $D$.  The polynomial partitioning theorem, Theorem \ref{polyham}, says that there is a non-zero polynomial $P$ of degree at most $D$ so that 

\begin{itemize}

\item $\RR^3 \setminus Z(P)$ is the union of at most $C D^3$ disjoint open cells $O_i$, and

\item for each cell $O_i$, $|S \cap O_i | \le C D^{-3} |S|$.

\end{itemize}

We define $\frak L_i \subset \frak L$ to be the set of lines from $\frak L$ that intersect the open cell $O_i$.  We note that $S \cap O_i  \subset P_r(\frak L_i)$.  If a line does not lie in $Z(P)$, then it can have at most $D$ intersection points with $Z(P)$, which means that it can enter at most $D+1$ cells $O_i$.  So each line of $\frak L$ intersects at most $D+1$ cells $O_i$.  Therefore, we get the following inequality:

\begin{equation} \label{totalgammair}
\sum_i |\frak L_i| \le (D+1) L \le 2 D L.
\end{equation}

Let $\beta > 0$ be a large parameter that we will choose below.  We say that a cell $O_i$ is $\beta$-good if

\begin{equation}\label{betagoodr}
|\frak L_i| \le \beta D^{-2} L.
\end{equation}

The number of $\beta$-bad cells is at most $2 \beta^{-1} D^3$.  Each cell contains at most $C D^{-3} |S|$ points of $S$.  Therefore, the bad cells all together contain at most $C \beta^{-1} |S|$ points of $S$.  We now choose $\beta$ so that $C \beta^{-1} \le (1/100)$.  $\beta$ is an absolute constant, independent of $\epsilon$.  We now have the following estimate:

\begin{equation}\label{badcellsboundr}
\textrm{The union of the bad cells contains at most $(1/100) |S|$ points of $S$.}
\end{equation}

For each good cell $O_i$, we apply induction to understand $\frak L_i$.  By choosing $D$ sufficiently large, we can guarantee that for each good cell, $| \frak L_i | \le (1/2) L$.  Now there are two cases, depending on whether $r \le 2 |\frak L_i|^{1/2}$.  

If $r \le 2 |\frak L_i|^{1/2}$, then we can apply the inductive hypothesis.  In this case, we see that there is a set $\cZ_i$ of irreducible algebraic surfaces of degree at most $D$ with the following two properties:

\begin{equation}\label{sizeofgamma_ir}
| \cZ_i| \le 2 |\frak L_i|^{(1/2) - \epsilon} \le 2 (\beta D^{-2} L)^{(1/2) - \epsilon}.
\end{equation}

Because $S \cap O_i \subset P_r(\frak L_i)$, we also get:

\begin{equation}\label{missedpointsinO_ir}
| (S \cap O_i) \setminus \cup_{Z \in \cZ_i} P_{r'}( \frak L_Z) | \le K |\frak L_i|^{(3/2) + \epsilon} r^{-2} \le K (\beta D^{-2} L)^{(3/2) + \epsilon} r^{-2} \le C_1 K D^{-3 - 2 \epsilon} L^{(3/2) + \epsilon} r^{-2}.
\end{equation}

On the other hand, if $r > 2 |\frak L_i|^{1/2}$, then we define $\cZ_i$ to be empty, and Proposition \ref{bigr} gives the bound 

\begin{equation}\label{missedpointsinO_irbigr}
|S \cap O_i| \le |P_r( \frak L_i) | \le 2 |\frak L_i| r^{-1} \le 2 L r^{-1} \le 4 L^{3/2} r^{-2}. 
\end{equation}

By choosing $K$ sufficiently large compared to $D$, we can arrange that $4 L^{3/2} r^{-2} \le C_1 K D^{-3 - 2 \epsilon} L^{(3/2) + \epsilon} r^{-2}$.  Therefore, inequality \ref{missedpointsinO_ir} holds for the good cells with $r > 2 |\frak L_i|^{1/2}$ as well as the good cells with $r \le 2 |\frak L_i|^{1/2}$.  
We sum this inequality over all the good cells:

$$ \sum_{O_i \textrm{ good}} |(S \cap O_i) \setminus \cup_{Z \in \cZ_i} P_{r'}( \frak L_Z) | \le C D^3 \cdot C_1 K D^{-3 - 2 \epsilon} L^{(3/2) + \epsilon} r^{-2} \le C_2 D^{-2 \epsilon} K L^{(3/2) + \epsilon} r^{-2} . $$

We choose $D(\epsilon)$ large enough so that $C_2 D^{-2 \epsilon} \le (1/400)$.  Therefore, we get the following:

\begin{equation}\label{goodcellsboundr} \sum_{O_i \textrm{ good}} |(S \cap O_i) \setminus \cup_{Z \in \cZ_i} P_{r'}( \frak L_Z) | \le (1/400) K L^{(3/2) + \epsilon} r^{-2}. 
\end{equation}

We have studied the points of $S$ in the good cells.  Next we study the points of $S$ in the zero set of the partioning polynomial $Z(P)$.  Let $Z_j$ be an irreducible component of $Z(P)$.  If $x \in S \cap Z_j$, but $x \notin P_{r'}(\frak L_{Z_j})$, then $x$ must be contained in at least $r/10$ lines of $\frak L \setminus \frak L_{Z_j}$.  Each line of $\frak L$ that is not contained in $Z_j$ has at most $\Deg(Z_j)$ intersection points with $Z_j$.  Therefore, 

$$| (S \cap Z_j) \setminus P_{r'}(\frak L_{Z_j})| \le 10 r^{-1} (\Deg Z_j) L. $$

If $\{ Z_j \}$ are all the irreducible components of $Z(P)$, then we see that

$$ | (S \cap Z(P)) \setminus \cup_j P_{r'}(\frak L_{Z_j}) | \le 10 r^{-1} D L. $$

We choose $K = K(\epsilon, D)$ sufficiently large so that $10 D \le (1/800) K$.   Since $r \le 2 L^{1/2}$, we have

\begin{equation}\label{cellwallsboundr}
| (S \cap Z(P)) \setminus \cup_j P_{r'}(\frak L_{Z_j}) | \le (1/800) K L r^{-1} \le (1/400) K L^{3/2} r^{-2}.  
\end{equation}

Now we define $\tcZ_S$ to be the union of $\cZ_i$ over all the good cells $O_i$ together with all the irreducible components $Z_j$ of $Z(P)$.   Each surface in $\tcZ_S$ is an algebraic surface of degree at most $D$.  By equation \ref{sizeofgamma_ir}, we have the following estimate for $| \tcZ_S|$: 

\begin{equation}\label{size of Z_Sr}
 | \tcZ_S | \le C D^3 (\beta D^{-2} L)^{(1/2) - \epsilon} + D \le \Poly(D) L^{(1/2) - \epsilon}.
\end{equation}

Summing the contribution of the bad cells in equation \ref{badcellsboundr}, the contribution of the good cells in equation \ref{goodcellsboundr}, and the contribution of 
the cell walls in equation \ref{cellwallsboundr}, we get:

\begin{equation}\label{totalboundr}
| S \setminus \cup_{Z \in \tcZ_S} P_{r'}(\frak L_Z) | \le (1/100) |S| + (1/200) K L^{(3/2) + \epsilon} r^{-2}.
\end{equation}

If we didn't have the $(1/100) |S|$ term coming from the bad cells, we could simply take $S = P_r(\frak L)$ and $\tcZ = \tcZ_S$.  Because of this term, we need to run the above construction repeatedly.

Let $S_1 = P_r(\frak L)$, and let $\tcZ_{S_1}$ be the set of surfaces constructed above.  Now we define
$ S_2 = S_1 \setminus \cup_{Z \in \tcZ_{S_1}} P_{r'} (\frak L_Z)$.  We iterate this procedure, defining

$$ S_{j+1} := S_j \setminus \cup_{Z \in \tcZ_{S_j}} P_{r'} (\frak L_Z). $$  

Each set $S_j$ is a subset of $P_r(\frak L)$.  Each set of surfaces $\tcZ_{S_j}$ has cardinality at most $\Poly(D) L^{(1/2) - \epsilon}$.  
Iterating equation \ref{totalboundr} we see:

\begin{equation}\label{iterationr}
|S_{j+1} | \le (1/100) |S_j| + (1/200) K L^{(3/2) + \epsilon} r^{-2}.
\end{equation}

We define $J = C \log L$ for a large constant $C$.  Because of the iterative formula in equation \ref{iterationr}, we get 

\begin{equation}\label{endboundr}
|S_J| \le (1/100) K L^{(3/2) + \epsilon} r^{-2}.
\end{equation}

We define $\tcZ = \cup_{j=1}^{J-1} \tcZ_{S_j}$.  This set of surfaces has the following properties.  Since each set $\tcZ_{S_j}$ has at most $\Poly(D) L^{(1/2) - \epsilon}$ surfaces, we get:

\begin{equation}\label{sizeoftcZr}
|\tcZ | \le \Poly(D) L^{(1/2) - \epsilon} \log L.
\end{equation}

Also, $P_r(\frak L) \setminus \cup_{Z \in \tcZ} P_{r'}(\frak L_Z) = S_J$, and so equation \ref{endboundr} gives:

\begin{equation}\label{tcZboundr}
| P_r(\frak L) \setminus \cup_{Z \in \tcZ} P_{r'}(\frak L_Z) | \le (1/100) K L^{(3/2) + \epsilon} r^{-2}.
\end{equation}

This finishes our construction of $\tcZ$.  Next we prune $\tcZ$ down to our desired set of surfaces $\cZ$.

\subsection{Pruning $\tcZ$}

We define

$$ \cZ := \{ Z \in \tcZ | Z \textrm{ contains at least } L^{(1/2) + \epsilon} \textrm{ lines of } \frak L \}. $$

To close our induction, we have to check two properties of $\cZ$.

\begin{enumerate}

\item $|\mathcal{Z}| \le 2 L^{(1/2) - \epsilon}$.

\item $| P_r(\frak L) \setminus \cup_{Z \in \mathcal{Z}} P_{r'}(\frak L_Z) | \le K L^{(3/2) + \epsilon} r^{-2}$.

\end{enumerate}

We begin with a simple lemma about surfaces that each contain many lines.

\begin{lemma} \label{surfcountr} Suppose $\frak L$ is a set of lines in $\RR^3$, and $\mathcal{Y}$ is a set of irreducible algebraic surfaces of degree at most $D$, and suppose that each surface $Z \in \mathcal{Y}$ contains at least $A$ lines of $\frak L$.  

If $A > 2 D | \frak L |^{1/2}$, then $| \mathcal{Y} | \le 2 | \frak L | A^{-1}$. 

\end{lemma}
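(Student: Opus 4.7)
The plan is to mimic the double-counting/greedy argument used in Proposition \ref{bigr}, with the role of ``a point lies on at most finitely many of the previously counted lines'' replaced by ``an irreducible surface shares few lines with previously chosen surfaces.'' The key new ingredient is Theorem \ref{bezoutlines}: if $Z, Z'$ are distinct irreducible surfaces of degree at most $D$, then their defining polynomials have no common factor, and so $Z \cap Z'$ contains at most $D^2$ lines.

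Enumerate the surfaces of $\mathcal{Y}$ as $Z_1, Z_2, \dots, Z_M$ in some order. For each $j$, let $n_j$ denote the number of lines of $\frak L$ contained in $Z_j$ but in none of $Z_1, \dots, Z_{j-1}$. Since $Z_j$ contains at least $A$ lines of $\frak L$, and by the B\'ezout observation at most $(j-1)D^2$ of these lie in one of the earlier surfaces, we have $n_j \ge A - (j-1)D^2$. The lines counted by different $n_j$'s are disjoint, so
\begin{equation*}
|\frak L| \;\ge\; \sum_{j=1}^M \max\bigl(A - (j-1)D^2,\; 0\bigr).
\end{equation*}

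Now I would argue by contradiction: suppose $M > 2|\frak L|A^{-1}$, and set $M' = \lfloor 2|\frak L|A^{-1}\rfloor + 1 \le M$. For every $j \le M'$ we have $(j-1)D^2 \le 2 D^2 |\frak L| A^{-1}$, and the hypothesis $A > 2D|\frak L|^{1/2}$ rearranges to $A^2 > 4 D^2 |\frak L|$, i.e.\ $A/2 > 2 D^2 |\frak L| A^{-1}$. Hence $A - (j-1)D^2 > A/2$ for all $j \le M'$, and so
\begin{equation*}
|\frak L| \;\ge\; \sum_{j=1}^{M'} \bigl(A - (j-1)D^2\bigr) \;>\; M'\cdot \frac{A}{2}.
\end{equation*}
This forces $M' < 2|\frak L|A^{-1}$, contradicting the definition of $M'$. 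Therefore $M \le 2|\frak L|A^{-1}$, as desired.

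The only place that needs genuine care is ensuring that the ``new lines'' counts $n_j$ are actually disjoint in the summation and that the B\'ezout bound $D^2$ per pair applies cleanly; irreducibility and distinctness of the $Z_i$ are exactly what is needed so that no two of them share a polynomial factor, making Theorem \ref{bezoutlines} applicable. The rest is arithmetic that falls out of the hypothesis $A > 2D|\frak L|^{1/2}$, which is tuned precisely so that the quadratic loss $(j-1)D^2$ never overtakes $A/2$ in the relevant range $j \le 2|\frak L|/A$.
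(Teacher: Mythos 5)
Your proposal is correct and follows essentially the same route as the paper: the Bézout-for-lines bound of $D^2$ shared lines between distinct irreducible surfaces, a greedy count of new lines giving $|\frak L| \ge \sum_j \max(A-(j-1)D^2,0)$, and the arithmetic from $A > 2D|\frak L|^{1/2}$ ensuring each term exceeds $A/2$. The only cosmetic difference is that you run the contradiction directly against $M > 2|\frak L|A^{-1}$ with a truncated sum, whereas the paper first deduces $|\mathcal{Y}| \le \tfrac{1}{2}AD^{-2}$ and then concludes $|\frak L| \ge |\mathcal{Y}|\,(A/2)$.
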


\begin{proof} The proof of this lemma follows the same idea as the proof of Proposition \ref{bigr}.  By the B\'ezout theorem for lines, Theorem \ref{bezoutlines}, the intersection of any two surfaces $Z_1, Z_2 \in \mathcal{Y}$ contains at most $D^2$ lines of $\frak L$.

We choose an ordering of the surfaces of $\mathcal{Y}$.  We consider the surfaces one at a time in order and count the number of new lines.

$Z_1$ contains at least $A$ lines of $\frak L$.
$Z_2$ contains at least $A - D^2$ lines of $\frak L$ that are not in $Z_1$.
$Z_{j+1}$ contains at least $A - jD^2$ lines of $\frak L$ that are not in the previous surfaces $Z_1, ..., Z_j$.  Therefore, we get the following inequality:

$$ |\frak L| \ge \sum_{j=1}^{|\mathcal{Y}|} \max( A - j D^2, 0 ). $$

If $j \le (1/2) A D^{-2}$, then $A - j D^2 \ge A/2$.  Therefore, if $|\mathcal{Y}| \ge (1/2) A D^{-2}$, then we see that $|\frak L| \ge (1/2) A D^{-2} ( A/2)$.  By hypothesis, we know $A > 2 D | \frak L|^{1/2}$, which gives the contradiction $| \frak L | > |\frak L|$.  Therefore, $|\mathcal{Y}| \le (1/2) A D^{-2}$.  Now we see that $ | \frak L | \ge |\mathcal{Y} | (A/2)$, and this completes the proof of the lemma. \end{proof}

We apply this lemma with $\mathcal{Y} = \cZ$ and $A = L^{(1/2) + \epsilon}$.  We can assume that $L^{\epsilon} > 2 D$, because the case of $L^\epsilon \le 2 D$ was the base of our induction, and we handled it by choosing $K$ sufficiently large.  Therefore, $A = L^{(1/2) + \epsilon} > 2 D L^{1/2}$, and the hypotheses of Lemma \ref{surfcountr} are satisfied.  The lemma tells us that $|\cZ| \le 2 L^{(1/2) - \epsilon}$, which proves item (1) above.  Now we turn to item (2).  We recall equation \ref{tcZboundr}:

$$| P_r(\frak L) \setminus \cup_{Z \in \tcZ} P_{r'}(\frak L_Z) | \le (1/100) K L^{(3/2) + \epsilon} r^{-2}. $$

Therefore, it suffices to check that

$$ \sum_{Z \in \tcZ \setminus \cZ} |P_{r'}( \frak L_Z) | \le (1/100) K L^{(3/2) + \epsilon} r^{-2}. $$

We sort $\tcZ \setminus \cZ$ according to the number of lines in each surface.  For each integer $s \ge 0$, we define:

$$ \tcZ_s := \{ Z \in \tcZ \textrm{ so that } | \frak L_Z | \in  [2^{s}, 2^{s+1}) \}. $$

Since each surface of $\tcZ$ with at least $L^{(1/2) + \epsilon}$ lines of $\frak L$ lies in $\cZ$, we see that:

\begin{equation}\label{uniontczsr}
\tcZ \setminus \cZ \subset \bigcup_{2^s \le L^{(1/2) + \epsilon}} \tcZ_s.
\end{equation}

For each $Z \in \tcZ_s$, $|\frak L_Z| \le 2^{s+1}$.  We use the Szemer\'edi-Trotter theorem, Theorem \ref{szemtrot}, to bound $P_{r'}(\frak L_Z)$.  Since $r' \ge (9/10) r$, Szemer\'edi-Trotter gives:

\begin{equation}\label{stboundr}
P_{r'} ( \frak L_Z) \le C \left( 2^{2s} r^{-3} + 2^s r^{-1} \right).
\end{equation}

Using Lemma \ref{surfcountr} with $A = 2^s$, we get the following estimate for $| \tcZ_s |$:

\begin{equation}\label{sizetcZ_sr}
\textrm{If } 2^s > 2 D L^{1/2}, \textrm{ then } |\tcZ_s| \le 2 L 2^{-s}.  
\end{equation}

We can now estimate $ \sum_{Z \in \tcZ \setminus \cZ} |P_{r'}( \frak L_Z)|$.

\begin{equation}\label{groupbysr}
 \sum_{Z \in \tcZ \setminus \cZ} |P_{r'}( \frak L_Z)| \le \sum_{2^s \le L^{(1/2) + \epsilon}} \left( \sum_{Z \in \tcZ_s} |P_{r'}(\frak L_Z)| \right) \le C \sum_{2^s \le L^{(1/2) + \epsilon}}  |\tcZ_s| \left( 2^{2s} r^{-3} + 2^s r^{-1} \right). 
\end{equation}

We consider the contribution to the last sum from $s$ in the range $2 D L^{1/2} < 2^s \le L^{(1/2) + \epsilon}$.  Using equation \ref{sizetcZ_sr} to estimate $|\tcZ_s|$ gives:

$$ \sum_{2 D L^{1/2} < 2^s \le L^{(1/2) + \epsilon}} |\tcZ_s| \left( 2^{2s} r^{-3} + 2^s r^{-1} \right) \le \sum_{2^s \le L^{(1/2) + \epsilon}} 
(2 L 2^{-s}) \left( 2^{2s} r^{-3} + 2^s r^{-1} \right) \le $$

$$\le C \sum_{2^s \le L^{(1/2) + \epsilon}} (L 2^s r^{-3} + L r^{-1}) \le C( L^{(3/2) + \epsilon} r^{-3} + L (\log L) r^{-1} ) \le 
C L^{(3/2) + \epsilon} r^{-2}. $$

Next we consider the contribution to the last sum in equation \ref{groupbysr} from $s$ in the range $2^s \le 2 D L^{1/2}$.  In this range of $s$, we use Equation \ref{sizeoftcZr} to bound $| \tcZ_s|$: $| \tcZ_s|  \le |\tcZ| \le  \Poly(D) L^{(1/2) - \epsilon} \log L$.  

\begin{equation} \label{smallscon} \sum_{2^s \le 2 D L^{1/2}} |\tcZ_s|  \left( 2^{2s} r^{-3} + 2^s r^{-1} \right) \le \Poly(D) \left(  L^{(1/2) - \epsilon} \log L \right) \left( 2^{2s} r^{-3} + 2^s r^{-1} \right). \end{equation}

Since $2^{s} \le 2 D L^{1/2}$ we see that $2^{2s} r^{-3} \le \Poly(D) L r^{-3}$ and $2^s r^{-1} \le \Poly(D) L^{1/2} r^{-1} \le \Poly(D) L r^{-2}$.  Plugging these into the right-hand side of equation \ref{smallscon}, we get

$$ \sum_{2^s \le 2 D L^{1/2}} |\tcZ_s|  \left( 2^{2s} r^{-3} + 2^s r^{-1} \right) \le \Poly(D) L^{3/2} r^{-2}. $$

All together, we see

$$ \sum_{Z \in \tcZ \setminus \cZ} |P_{r'}( \frak L_Z) | \le \Poly(D) L^{(3/2) + \epsilon} r^{-2}. $$

Choosing $K = K(\epsilon, D)$ sufficiently large, we see that

$$ \sum_{Z \in \tcZ \setminus \cZ} |P_{r'}( \frak L_Z) | \le (1/100) K L^{(3/2) + \epsilon} r^{-2}. $$

This proves item (2), closing the induction, and finishing the proof of Theorem \ref{clusterinZr}.

\section{Distinct distances}

In \cite{ES}, Elekes and Sharir proposed a new approach to the distinct distance problem, connecting it to incidence estimates about curves in $\RR^3$.  A tiny modification of these ideas is explained in Section 2 of \cite{GK2}, connecting the distinct distance problem to an estimate about incidences of lines in $\RR^3$.  The paper \cite{GK2} then uses Theorem \ref{gkthm} to control these incidences.  We can also use our slightly weaker Theorem \ref{mainincid} to prove a slightly weaker bound on the number of distinct distances.

In this section, we give a concise review of the Elekes-Sharir approach to the distinct distance problem.  Using our incidence bound, Theorem \ref{mainincid}, we prove the following distinct distance bound.

\begin{theorem} \label{distbound} For any $\eps > 0$, there is a constant $c_\eps > 0$ so that the following holds.  If $P$ is a set of $N$ points in $\RR^2$, then $P$ determines at least $c_\eps N^{1 - \eps}$ distinct distances.
\end{theorem}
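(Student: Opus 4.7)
The plan is to invoke the Elekes--Sharir reduction of the distinct distance problem to an incidence problem about lines in $\RR^3$ and then feed Theorem~\ref{mainincid} into that reduction. Let $d := d(P)$ denote the number of distinct distances and define
$$ Q := |\{(p_1,p_2,p_3,p_4) \in P^4 : |p_1-p_2| = |p_3-p_4| > 0\}|. $$
Writing $n_i$ for the number of ordered pairs in $P$ realizing the $i$-th positive distance, Cauchy--Schwarz gives $Q = \sum_i n_i^2 \ge (\sum_i n_i)^2/d = N^2(N-1)^2/d$. Hence a bound of the form $Q \lesssim N^{3+\eps}$ immediately yields $d \ge c_\eps N^{1-\eps}$ (any logarithmic loss is absorbed by replacing $\eps$ with a slightly larger exponent).

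Two ordered pairs $(p_1,p_2)$ and $(p_3,p_4)$ realize the same positive distance if and only if there is an orientation-preserving rigid motion of $\RR^2$ sending $p_1 \mapsto p_3$ and $p_2 \mapsto p_4$. Such a motion is either a translation or a rotation. The translation contribution to $Q$ is handled directly: setting $m(v) := |\{(p,q) \in P^2 : q-p = v\}|$, it equals $\sum_v m(v)^2 \le N \sum_v m(v) = N^3$, since $m(v) \le N$ and $\sum_v m(v) = N^2$. The rotation contribution is the main term and is where Theorem~\ref{mainincid} enters.

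For each ordered pair $(p,q) \in P \times P$ with $p \ne q$, the set of rotations sending $p$ to $q$ is a one-parameter family; under the standard rational parameterization of the rotation group by $\RR^3$ used in \cite{ES} and Section~2 of \cite{GK2}, this family becomes a genuine straight line $\ell_{p,q}$. This produces a collection $\frak L$ of $L = N(N-1)$ lines in $\RR^3$. For a rotation $\rho$, set $k(\rho) := |P \cap \rho^{-1}(P)|$; then $\rho \in P_r(\frak L)$ precisely when $k(\rho) \ge r$, and the rotation contribution to $Q$ equals $\sum_\rho k(\rho)(k(\rho)-1)$, which I would estimate dyadically by $\sum_{r \text{ dyadic}} r^2 |P_r(\frak L)|$. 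Since a planar rotation is determined by its action on two points, $k(\rho) \le N$, so the relevant range is $2 \le r \le N \le 2L^{1/2}$ — exactly the range in which Theorem~\ref{mainincid} applies.

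The hard part is verifying the non-concentration hypothesis of Theorem~\ref{mainincid}: no irreducible algebraic surface of degree at most $D$ contains more than $L^{(1/2)+\eps} \sim N^{1+2\eps}$ of the lines $\ell_{p,q}$. This is the main geometric obstacle, and is handled by the Elekes--Sharir/Guth--Katz analysis of \cite{GK2}: an algebraic family of $\ell_{p,q}$'s of that size forces the corresponding base points to cluster on a single line or circle in $\RR^2$. I would therefore reduce to the case in which $P$ places at most $N^{1/2}$ points on any line or any circle — the contrary cases are handled either trivially (a line through $\ge N^{1-\eps}$ points already exhibits $N^{1-\eps}$ distinct distances between those points) or by the classical distinct-distance bound on a circle. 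Under this assumption Theorem~\ref{mainincid} gives $|P_r(\frak L)| \le K N^{3+2\eps} r^{-2}$, so each dyadic scale contributes at most $K N^{3+2\eps}$ to the rotation sum; adding the $O(\log N)$ dyadic pieces together with the translation bound $N^3$ yields $Q \lesssim N^{3+2\eps} \log N$. Replacing $\eps$ by $\eps/3$ absorbs the logarithm, which completes the proof of Theorem~\ref{distbound}.
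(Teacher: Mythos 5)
Your overall reduction is the same as the paper's: Cauchy--Schwarz to pass from distinct distances to the quadruple count $Q$, the split into translation and rotation contributions (the translation term bounded by $N^3$ exactly as in the paper), the parameterization of rotations by lines $\ell_{p,q}$ in $\RR^3$, and the conversion of the rotation term into a sum of $r^2|P_r(\frak L)|$ over (dyadic) $r$ in the range $2 \le r \le N \le 2L^{1/2}$. All of that matches Section 4 of the paper.

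The genuine gap is in how you verify the hypothesis of Theorem~\ref{mainincid}. The paper does \emph{not} need any reduction to point sets with few points on a line or circle: it proves Lemma~\ref{noncluster}, which holds unconditionally for every $N$-point set $P$, namely that any irreducible algebraic surface of degree at most $D$ contains at most $C_D N$ lines of $\frak L(P)$ (and at most $N$ lines pass through any point). Since $L = N^2$, one has $C_D N < L^{(1/2)+\eps} = N^{1+2\eps}$ for $N$ large, so the non-concentration hypothesis is automatic. The proof of that lemma is the real content you are missing: for fixed $p$ the lines $\{\ell_{p,q}\}_q$ are pairwise skew, they are the integral curves of an explicit polynomial vector field $V_p$, and a B\'ezout argument applied to $Q$ and $V_p\cdot\nabla Q$ shows that for an irreducible surface of degree $>1$ at most one base point $p$ can contribute $2D^2$ or more lines, giving the $O_D(N)$ bound. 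In place of this, you assert without proof that a low-degree surface containing $\gtrsim N^{1+2\eps}$ of the lines forces the base points onto a single line or circle; that structural claim is not established anywhere in your argument (and is not the mechanism the paper uses). Moreover, even granting it, your dichotomy does not close: you reduce to the case where every line and circle contains at most $N^{1/2}$ points of $P$, but you only dispose of the contrary case when some line or circle contains at least $N^{1-\eps}$ points, leaving configurations such as a line through $N^{3/4}$ points unhandled by either branch; and ``deleting'' rich lines or circles and applying the incidence theorem to the remainder is not a clean reduction as described, since the deleted points can carry most of the quadruples. To repair the argument you should replace this step by (a proof of) the unconditional non-clustering bound of Lemma~\ref{noncluster}.
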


If $P \subset \RR^2$ is a set of points, we let $d(P)$ be the set of distinct distances:

$$ d(P) := \{ | p_1 - p_2| \}_{p_1, p_2 \in P}. $$

The approach of Elekes and Sharir involves the set of distance quadruples $Q(P)$:

$$ Q(P) := \{ (p_1, p_2, p_3, p_4) \in P^4 \textrm{ so that } |p_1 - p_2| = |p_3 - p_4| \not= 0 \}. $$

A simple Cauchy-Schwarz inequality proves the following estimate (Lemma 2.1 in \cite{GK2}):

\begin{equation} \label{dvsQ} |d(P)| \ge \frac{ N^4 - 2 N^3} { |Q(P)|}. \end{equation}

The heart of the matter is to prove an upper bound for $|Q(P)|$.  The next step is to introduce a family of lines in $\RR^3$, $\frak L(P)$, associated to the set $P \subset \RR^2$.  The incidence geometry of this family of lines encodes the distance quadruples.

For any two points $p_1, p_2 \in \RR^2$, we define a line $l_{p_1, p_2} \subset \RR^3$ as follows.  Suppose that $p_1 = (x_1, y_1)$ and $p_2 = (x_2, y_2)$.  We use $x, y, z$ for the coordinates of $\RR^3$.  Then $l_{p_1,p_2}$ is the line defined by the following equations:

\begin{equation} \label{eqlinex} 
2 x = (x_1 + x_2) + (y_1 - y_2) z.
 \end{equation}

\begin{equation} \label{eqliney} 
2y = (y_1 + y_2) + (x_2 - x_1) z.
 \end{equation}

The set $\frak L(P)$ is defined to be $\{ l_{p_1, p_2} \}_{p_1, p_2 \in P}$.  If $P$ is a set of $N$ points, then $\frak L(P)$ is a set of $N^2$ lines.  The connection between $Q(P)$ and $\frak L(P)$ appears in the following lemma.

\begin{lemma} \label{quadinter} A quadruple $(p_1, p_2, p_3, p_4) \in P^4$ is a distance quadruple if and only if
the line $l_{p_1, p_3}$ and the line $l_{p_2, p_4}$ are intersecting or parallel.
\end{lemma}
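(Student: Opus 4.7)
The plan is to recognize the line $l_{p_1,p_2}$ as the parameter space of rotations of $\RR^2$ that send $p_1$ to $p_2$. Concretely, I will show that a point $(a,b,z) \in \RR^3$ lies on $l_{p_1,p_2}$ if and only if the rotation centered at $(a,b)$ through the angle $\theta$ with $z = \cot(\theta/2)$ maps $p_1$ to $p_2$. This is a direct computation: substitute the image of $p_1$ under this rotation into the defining equations $2x = (x_1+x_2)+(y_1-y_2)z$ and $2y = (y_1+y_2)+(x_2-x_1)z$; after clearing with the half-angle identities $1+\cos\theta = 2\cos^2(\theta/2)$ and $\sin\theta = 2\sin(\theta/2)\cos(\theta/2)$, both equations reduce precisely to $z = \cot(\theta/2)$. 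As $z$ ranges over $\RR$, the point $(a,b)$ sweeps out the perpendicular bisector of $\overline{p_1 p_2}$ and $\theta$ ranges over $(0,2\pi)$, so $l_{p_1,p_2}$ parametrizes all nontrivial rotations carrying $p_1$ to $p_2$; the ``missing'' value $z = \infty$ corresponds to translation by $p_2 - p_1$, and this is what will govern the parallel case.

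For the forward direction of the lemma, assume $|p_1-p_2| = |p_3-p_4| \neq 0$. Elementary planar geometry supplies a unique direct (orientation-preserving) isometry $\phi$ of $\RR^2$ taking the ordered pair $(p_1,p_2)$ to $(p_3,p_4)$. If $\phi$ is a nontrivial rotation with center $(a,b)$ and angle $\theta \in (0,2\pi)$, then $(a,b,\cot(\theta/2))$ lies on both $l_{p_1,p_3}$ and $l_{p_2,p_4}$ by the interpretation above, so the two lines meet. If instead $\phi$ is a translation by $v$, then $p_3-p_1 = v = p_4-p_2$. Reading off the direction vectors of $l_{p_1,p_3}$ and $l_{p_2,p_4}$ from the defining equations, namely $(y_1-y_3,\, x_3-x_1,\, 2)$ and $(y_2-y_4,\, x_4-x_2,\, 2)$, the relation $p_3-p_1 = p_4-p_2$ forces the two direction vectors to coincide, so the lines are parallel.

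The converse is immediate from the same interpretation. A common point $(a,b,z)$ of $l_{p_1,p_3}$ and $l_{p_2,p_4}$ exhibits a single rotation sending both $p_1 \mapsto p_3$ and $p_2 \mapsto p_4$, so $|p_1-p_2| = |p_3-p_4|$ since this rotation is an isometry. Parallelism of the two lines forces their direction vectors to be scalar multiples and, since both have last coordinate $2$, actually to be equal, which is exactly the relation $p_3-p_1 = p_4-p_2$; this is realized by the translation sending $(p_1,p_2)$ to $(p_3,p_4)$, and hence preserves distances.

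The main work in the proof is the half-angle computation establishing the rotation parametrization of $l_{p_1,p_2}$; once that identification is in hand, everything else is bookkeeping. The only subtlety is the degenerate case where $\phi$ is the identity (which is forced when $p_1 = p_3$ and $p_2 = p_4$), but this falls harmlessly into the translation branch with $v = 0$ and produces coincident, hence parallel, lines.
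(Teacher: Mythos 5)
Your proof is correct, but it takes a genuinely different route from the one in the paper. You prove the lemma via the rigid-motions interpretation: $l_{p_1,p_2}$ is identified with the set of orientation-preserving isometries taking $p_1$ to $p_2$, rotations corresponding to points $(a,b,\cot(\theta/2))$ and the translation to the direction (point at infinity) of the line. This is precisely the Elekes--Sharir viewpoint, which the paper explicitly cites as the motivation and as the proof given in Section 2 of \cite{GK2}, but deliberately avoids here: the paper instead passes to the projective completion in $\mathbb{RP}^3$, observes that ``intersecting or parallel'' means meeting in $\mathbb{RP}^3$, expresses that meeting condition as the vanishing of a $2\times 2$ determinant in the coefficients of the two lines, and verifies by a short algebraic rearrangement that this determinant identity is exactly $(x_1-x_2)^2+(y_1-y_2)^2=(x_3-x_4)^2+(y_3-y_4)^2$. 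Your approach buys conceptual clarity (it explains where the lines come from, and the intersecting/parallel dichotomy falls out as rotation versus translation), while the paper's computation is shorter, trigonometry-free, and handles the parallel case uniformly as intersection at infinity. One spot in your sketch deserves tightening: substituting the rotated image of $p_1$ into the defining equations does not reduce ``precisely'' to $z=\cot(\theta/2)$; each equation factors as $\bigl(\cos(\theta/2)-z\sin(\theta/2)\bigr)$ times a linear expression in $p_1-(a,b)$, and the extra factors vanish simultaneously only when the rotation center is $p_1$ itself (forcing $p_2=p_1$, excluded since the distance is nonzero). Equivalently, one can close this by noting that for fixed $(a,b,z)$ and $p_1$ the defining equations determine $p_2$ uniquely (the linear system has determinant $1+z^2>0$, as in the paper's Lemma 4.7), so the rotation with parameter $z$ must send $p_1$ to $p_2$. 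With that point made explicit, your argument is complete.
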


Remark: The condition of being intersecting or parallel is natural from the projective point of view.  Two lines $l, \bar l$ are intersecting or parallel in $\RR^n$ if and only if they intersect in $\mathbb{RP}^n$.

We now give a proof by direct computation.  The paper \cite{ES} gives a nice motivation for introducing these lines.  The motivation comes from the group of rigid motions of the plane, which is a symmetry group of the distinct distance problem.  This point of view is also explained in Section 2 of \cite{GK2}.  Lemma \ref{quadinter} is proven in Section 2 of \cite{GK2} using the point of view of rigid motions.

\begin{proof} First we describe the projective completion of the line $l_{p_1, p_2}$ in $\mathbb{RP}^3$.  A point in $\mathbb{RP}^3$ is an equivalence class of non-zero vectors $(w,x,y,z) \in \RR^4$, where two vectors are equivalent if one is a scalar multiple of the other.  In these coordinates, the equations for the line $l_{p_1, p_2} \subset \mathbb{RP}^3$ are as follows:

\begin{equation} \label{eqlinexpro} 
2 x = (x_1 + x_2) w + (y_1 - y_2) z.
 \end{equation}

\begin{equation} \label{eqlineypro} 
2y = (y_1 + y_2) w + (x_2 - x_1) z.
\end{equation}

Next we investigate when two lines in $\mathbb{RP}^3$ intersect.  Suppose that $l$ is defined by the equations

\begin{equation} \label{eql}
2x = a_x w + b_x z; 2y = a_y w + b_y z.
\end{equation}

\noindent and $\bar l$ is defined by the equations

\begin{equation} \label{eqbarl}
2x = \bar a_x w + \bar b_x z; 2y = \bar a_y w + \bar b_y z.
\end{equation}

The lines $l$ and $\bar l$ intersect in $\mathbb{RP}^3$ if and only if the following system of two equations in $w,z$ has a non-zero solution:

\begin{equation} \label{lbarlinter}
a_x w + b_x z = \bar a_x w + \bar b_x z ; a_y w + b_y z =  \bar a_y w + \bar b_y z
\end{equation}

\noindent By standard linear algebra, this system of equations has a non-zero solution if and only if an appropriate determinant vanishes, which we can rewrite as the following equation:

\begin{equation} \label{lbarlniceeq}
(a_x - \bar a_x) (b_y - \bar b_y) = (a_y - \bar a_y) (b_x - \bar b_x).
\end{equation}

Now we take $l = l_{p_1, p_3}$ and $\bar l = l_{p_2, p_4}$.  Using equations \ref{eqlinexpro} and \ref{eqlineypro}, we can find the values of $a_x$ etc.  In particular, we see that $a_x = x_1 + x_3$, $a_y = y_1 + y_3$, $b_x = y_1 - y_3$ and $b_y = x_3 - x_1$, and similarly $\bar a_x = x_2 + x_4$, $\bar a_y = y_2 + y_4$, $\bar b_x = y_2 - y_4$, and $\bar b_y = x_4 - x_2$.  When we plug these values into equation \ref{lbarlniceeq}, we get a homogeneous quadratic equation in $x_i$ and $y_i$.  We claim that this equation is equivalent to $(x_1 - x_2)^2 + (y_1 - y_2)^2 = (x_3 - x_4)^2 - (y_3 - y_4)^2$.  Here is the computation.  Plugging the values of $a_x$ etc. into equation \ref{lbarlniceeq}, we immediately get:

$$ \left[ (x_1 + x_3) - (x_2 + x_4) \right] \left[ (x_3 - x_1) - (x_4 - x_2) \right] = \left[ (y_1 + y_3) - (y_2 + y_4) \right] \left[ (y_1 - y_3) - (y_2 -y_4) \right].  $$

Rearranging the terms inside of each large parentheses, this is equivalent to

$$ \left[ (x_3 - x_4) + (x_1 - x_2) \right] \left[ (x_3 - x_4) - (x_1 - x_2) \right] =  \left[  (y_1 - y_2) + (y_3 - y_4) \right] \left[ (y_1 - y_2) - (y_3 - y_4) \right] $$

Expanding both sides, this is equivalent to 

$$ (x_3 - x_4)^2 - (x_1 - x_2)^2 = (y_1 - y_2)^2 - (y_3 - y_4)^2. $$

Moving the negative terms to the other sides, this is equivalent to

$$ (x_3 - x_4)^2 + (y_3 - y_4)^2 = (x_1 - x_2)^2 + (y_1 - y_2)^2. $$

This is equivalent to $|p_3 - p_4| = |p_1 - p_2|$.  

\end{proof}

Because of Lemma \ref{quadinter}, each distance quadruple $(p_1, p_2, p_3, p_4) \in Q(P)$ can be labelled as an intersecting quadruple or a parallel quadruple, depending on whether $l_{p_1, p_3}$ and $l_{p_2, p_4}$ are intersecting or parallel.

The number of parallel quadruples is straightforward to bound.  If $l_{p_1,p_3}$ and $l_{p_2, p_4}$ are parallel, then equations \ref{eqlinex} and \ref{eqliney} imply that $y_1 - y_3 = y_2 - y_4$ and $x_3 - x_1 = x_4 - x_2$.  In other words, $l_{p_1, p_3}$ and $l_{p_2, p_4}$ are parallel if and only if $p_1 - p_2 = p_3 - p_4$.  For any $p_1, p_2, p_3$, there is at most one $p_4 \in P$ so that $p_1 - p_2 = p_3 - p_4$, and so there are at most $N^3$ parallel distance quadruples.  

From now on, we sometimes abbreviate $\frak L(P)$ by $\frak L$.  

The number of intersecting distance quadruples can be counted as follows.  We let $P_{=r}(\frak L)$ denote the set of points that lie in exactly $r$ lines of $\frak L$.  At each point of $P_{=r}(\frak L)$ there are $r^2 - r$ intersecting pairs $(l_1, l_2) \in \frak L^2$.  Therefore, the number of intersecting distance quadruples is

$$ |Q(P)_{inter}| = \sum_{r \ge 2} (r^2 - r) |P_{=r}(\frak L)|. $$

Since $|P_{=r}(\frak L)| = |P_r(\frak L)| - |P_{r+1}(\frak L)|$, we can rewrite this formula as

\begin{equation} \label{|Q|rrich} |Q(P)_{inter}| = \sum_{r \ge 2} (2r - 2) |P_r(\frak L)|. \end{equation}

Therefore, a bound on $|P_r(\frak L)|$ gives a bound on $|Q(P)|$.  

To bound $|P_r(\frak L)|$ the paper \cite{GK2} proves the following result (Proposition 2.8 in \cite{GK2}):

\begin{lemma} \label{nonclustergk} If $P \subset \RR^2$ is a set of $N$ points, then $\frak L(P)$ contains at most $C N$ lines in any plane or regulus, and at most $N$ lines of $\frak L(P)$ contain any point.
\end{lemma}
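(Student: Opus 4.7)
The plan is to use the rigid motion interpretation: each point $(x, y, z) \in \RR^3$ represents a rigid motion of $\RR^2$ (with $z = \tan(\theta/2)$ encoding the rotation angle and $x, y$ encoding the translation), and each line $l_{p_1, p_2}$ is the one-parameter family of rigid motions sending $p_1$ to $p_2$. Each of the three assertions in the lemma then reduces to a direct computation from equations \ref{eqlinex} and \ref{eqliney}.

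For the point bound, fixing $(x_0, y_0, z_0) \in \RR^3$ turns \ref{eqlinex} and \ref{eqliney} into a $2 \times 2$ linear system for $(x_2, y_2)$ whose right-hand side is a linear function of $(x_1, y_1)$. The coefficient determinant equals $1 + z_0^2 > 0$, so for each $p_1 \in P$ there is a unique candidate $p_2 \in \RR^2$; requiring also $p_2 \in P$ leaves at most $N$ pairs, hence at most $N$ lines through the fixed point.

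For the plane bound, write the plane as $\alpha x + \beta y + \gamma z = \delta$ and substitute the parametric form of $l_{p_1, p_2}$. Demanding that the resulting polynomial in $z$ vanish identically produces two linear conditions on $(p_1, p_2)$, namely $(\alpha, \beta) \cdot (p_1 + p_2) = 2\delta$ and $(-\beta, \alpha) \cdot (p_1 - p_2) = -2\gamma$. Rotating coordinates around the $z$-axis so that $(\alpha, \beta)$ aligns with an axis decouples these into equations of the shape $x_1 + x_2 = c_1$ and $y_1 - y_2 = c_2$, which determine $p_2$ uniquely from $p_1$; hence at most $N$ pairs contribute. The degenerate horizontal case $\alpha = \beta = 0$ contains no line of $\frak L(P)$, since every $l_{p_1, p_2}$ has nonzero $z$-component in its direction vector.

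The main obstacle is the regulus bound. Writing the regulus as $\{Q = 0\}$ for an irreducible quadratic $Q$ and substituting the parametric form of $l_{p_1, p_2}$ produces a polynomial in $z$ of degree at most $2$; setting its three coefficients to zero gives three polynomial conditions on $(p_1, p_2) \in \RR^4$ cutting out a one-dimensional variety. Since this is a curve there is no $O(N)$ bound from dimension alone. The plan is instead to parametrize each of the two rulings of $R$ by a single parameter $t$, so that $\ell(t) = l_{p_1(t), p_2(t)}$ with $p_1(t), p_2(t)$ algebraic of bounded degree in $t$. At least one of $p_1(t), p_2(t)$ must be nonconstant, since otherwise the ruling would be a single line rather than a one-parameter family; a nonconstant algebraic map of bounded degree has $O(1)$ preimages of any fixed point, so only $O(N)$ values of $t$ yield $p_1(t) \in P$ (or $p_2(t) \in P$). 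At each such $t$ the remaining coordinate is determined, and we need it to lie in $P$ as well. Summing over the two rulings gives the claimed $CN$ bound.
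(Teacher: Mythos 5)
Your proposal is correct in substance, but it is not the route taken here: this paper does not actually reprove Lemma \ref{nonclustergk} (it is quoted from Proposition 2.8 of \cite{GK2}); what it proves is the stronger Lemma \ref{noncluster}, bounding by $C_D N$ the number of lines of $\frak L(P)$ in \emph{any} algebraic surface of degree at most $D$. The paper's mechanism is uniform in the degree: the skewness of $l_{p,q_1}$ and $l_{p,q_2}$ for $q_1 \ne q_2$ disposes of points and planes, and for a general irreducible $Z(Q)$ it constructs the tangent vector field $V_p$ of Lemma \ref{lemV_p} and uses the vanishing of $V_p \cdot \nabla Q$ together with the B\'ezout bound for lines (Theorem \ref{bezoutlines}) to show that at most one base point $p$ can contribute more than $2D^2$ lines, giving the bound $(2D^2+1)N$. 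Your handling of the point and plane cases by direct linear algebra from equations \ref{eqlinex} and \ref{eqliney} is fine and gives the same bound $N$ (your $1+z_0^2 > 0$ determinant is exactly the computation underlying Lemma \ref{lemV_p}). For the regulus you do something genuinely different and quadric-specific: every line in a doubly ruled quadric lies in one of its two rulings, each ruling admits a bounded-degree rational parametrization $t \mapsto \ell(t)$, the correspondence between non-horizontal lines and pairs $(p_1,p_2)$ is linear and invertible, so $p_1(t), p_2(t)$ are rational of bounded degree, not both constant, and counting preimages of $P$ under a nonconstant coordinate gives $O(N)$ lines per ruling. This works; in a full write-up you should justify the bounded-degree real parametrization of each ruling (e.g.\ via projective equivalence with a standard doubly ruled quadric) and remark that the finitely many ruling lines with horizontal direction are never of the form $l_{p_1,p_2}$, but these are standard points rather than gaps. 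The trade-off between the two arguments: yours is more elementary and suffices for the plane-or-regulus statement needed alongside Theorem \ref{gkthm}, while the vector-field/B\'ezout argument of Lemma \ref{noncluster} applies to all surfaces of degree at most $D$, which is what this paper needs in order to invoke Theorem \ref{mainincid}.
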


\noindent With this lemma in hand, \cite{GK2} can apply Theorem \ref{gkthm}, giving the bound $|P_r(\frak L)| \le C N^3 r^{-2}$ for all $2 \le r \le N$.  (And for $r > N+1$, Lemma \ref{nonclustergk} says that $|P_r(\frak L)|  = 0$.)  Plugging these bounds into equation \ref{|Q|rrich} shows that $|Q(P)| \le N^3 + \sum_{r=2}^N C N^3 r^{-1} \le C N^3 \log N$.

We will use Theorem \ref{mainincid} in place of Theorem \ref{gkthm} to give a slightly weaker bound on the number of distance quadruples.  In order to apply Theorem \ref{mainincid} we need a slightly stronger lemma.

\begin{lemma} \label{noncluster} For any degree $D \ge 1$ there is a constant $C_D$ so that the following holds.   If $P \subset \RR^2$ is a set of $N$ points, then $\frak L(P)$ contains at most $C_D N$ lines in any algebraic surface of degree at most $D$.  Also $\frak L(P)$ contains at most $N$ lines that pass through any point.
\end{lemma}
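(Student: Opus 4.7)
The proof splits into two parts matching the two claims. For the bound on the number of lines of $\frak L(P)$ through a single point $q = (x_0, y_0, z_0) \in \RR^3$, I would substitute $z = z_0$ into the defining equations \eqref{eqlinex}--\eqref{eqliney}. This gives two linear equations for $(x_2, y_2)$ whose coefficient matrix is $\begin{pmatrix} 1 & -z_0 \\ z_0 & 1 \end{pmatrix}$, with determinant $1 + z_0^2 > 0$. So for each $p_1 \in P$ there is exactly one $p_2 \in \RR^2$ with $q \in l_{p_1, p_2}$, and requiring $p_2 \in P$ yields at most $N$ lines of $\frak L(P)$ through $q$.

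For the bound of $C_D N$ lines of $\frak L(P)$ in an algebraic surface $Z$ of degree at most $D$, I would first decompose $Z$ into its at most $D$ irreducible components and reduce to proving the bound for each irreducible $Z$ of degree at most $D$, absorbing the factor of $D$ into $C_D$. I would then split into three cases based on the structure of the set of lines contained in $Z$.

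If $Z$ is a plane $ax + by + cz + d = 0$, substituting the parametric form of $l_{p_1, p_2}$ and matching coefficients of $z^0$ and $z^1$ yields two affine-linear equations in $(x_1, y_1, x_2, y_2)$ whose $(x_2, y_2)$-coefficient matrix has determinant $-(a^2 + b^2)$. Horizontal planes ($a = b = 0$) contain no line of $\frak L(P)$ at all, since every line $l_{p_1, p_2}$ has direction vector with nonzero $z$-component. For every non-horizontal plane the determinant is nonzero, so $p_2$ is uniquely determined by $p_1$, giving at most $N$ lines of $\frak L(P)$ in $Z$. If $Z$ is an irreducible non-plane surface containing only finitely many lines, then the classical Cayley--Salmon theorem gives at most $O_D(1)$ lines total, and hence at most $C_D N$ lines of $\frak L(P)$ in $Z$. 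Finally, if $Z$ is an irreducible non-plane surface containing infinitely many lines, then $Z$ is ruled and the set of lines in $Z$ forms a finite union of algebraic curves of total degree $O_D(1)$ in the $4$-dimensional space of lines in $\RR^3$. Pulling this back through the algebraic bijection $(p_1, p_2) \mapsto l_{p_1, p_2}$ from $\RR^4$ onto the non-vertical lines of $\RR^3$ yields an algebraic curve $\tilde\Gamma \subset \RR^4$ of degree $O_D(1)$. I would conclude by projecting $\tilde\Gamma$ to the first factor $\pi_1 \colon \tilde\Gamma \to \RR^2$: if the image is a single point $p_1^\ast$, then $\tilde\Gamma = \{p_1^\ast\} \times C$ for some curve $C \subset \RR^2$ and $|\tilde\Gamma \cap (P \times P)| \le N$; otherwise $\pi_1|_{\tilde\Gamma}$ is a non-constant finite map of degree $O_D(1)$, and summing the fiber sizes over $p_1 \in P$ gives $|\tilde\Gamma \cap (P \times P)| \le C_D N$.

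The plane case essentially reprises the argument behind Lemma \ref{nonclustergk} from \cite{GK2}, so the genuinely new content is the higher-degree ruled case. The main obstacle I anticipate is invoking (rather than reproving) the classical algebraic-geometry input that the lines on an irreducible non-plane surface form an algebraic family of dimension at most $1$ with degree bounded in terms of $D$; once this is taken for granted, the projection argument in the $\RR^4$ parameter space runs parallel to the plane case.
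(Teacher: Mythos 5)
Your proposal is correct in outline, but it takes a genuinely different route from the paper. The parts you handle by direct linear algebra (at most $N$ lines through a point, at most $N$ lines in a plane) match what the paper gets from the observation that $l_{p,q_1}$ and $l_{p,q_2}$ are skew for $q_1 \neq q_2$, and the same $(1+z_0^2)$-determinant computation appears inside the paper's proof of Lemma \ref{lemV_p}. The real divergence is in the case of an irreducible surface of degree between $2$ and $D$: you invoke the classification of lines on surfaces (Cayley--Salmon for the non-ruled case, and a dimension/degree bound for the one-parameter family of lines on a ruled surface) and then count pairs $(p_1,p_2)\in P\times P$ on a bounded-degree curve in $\RR^4$ by projecting to the first factor. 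The paper avoids all of that machinery: it proves Lemma \ref{onebadpointZ(Q)} --- for an irreducible $Q$ with $1<\Deg Q\le D$, at most one point $p\in\RR^2$ can have $2D^2$ or more lines of $\frak L_p$ inside $Z(Q)$ --- using only the polynomial vector field $V_p$ of Lemma \ref{lemV_p} (tangent to the foliation of $\RR^3$ by $\{l_{p,q}\}_q$, and crucially \emph{linear} in $p$) together with the B\'ezout theorem for lines, Theorem \ref{bezoutlines}; this immediately gives the bound $(2D^2+1)N$. Your approach buys generality (the projection count works for any injective algebraic parametrization of a $4$-parameter line family, once the algebraic-geometry inputs are granted), but it imports exactly the kind of classical input --- real Cayley--Salmon and the structure of ruled surfaces --- that this paper is written to avoid (that theory is what the $r=2$ argument of \cite{GK2} needed), whereas the paper's argument is elementary and self-contained. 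Two minor repairs to your sketch: the constant-versus-finite dichotomy for the projection $\pi_1$ must be applied to each irreducible component of $\tilde\Gamma$ separately (on all of $\tilde\Gamma$ the projection need be neither constant nor finite when components of both types occur), and the claims that the family of lines in a non-planar irreducible $Z$ has dimension at most $1$ and total degree $O_D(1)$ over $\RR$ need a B\'ezout-type justification or a precise citation; both points are routine but should be stated.
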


\noindent We will give the proof of Lemma \ref{noncluster} below.  Using Lemma \ref{noncluster}, we can apply Theorem \ref{mainincid}, giving the following bound: for any $\eps > 0$, there is a constant $C_\eps$ so that 

$$|P_r(\frak L)| \le C_\eps N^{3 + \eps} r^{-2}. $$

Plugging this bound into equation \ref{|Q|rrich}, we see that 

$$|Q(P)| \le N^3 + \sum_{r=2}^N (2r - 2) |P_r(\frak L)| \le N^3 + \sum_{r=2}^N C_\eps N^{3 + \eps} r^{-1} \le  C_\eps N^{3 + \eps}. $$
  
Plugging this bound into equation \ref{dvsQ}, we see that $|d(P)| \ge c_\eps N^{1 - \eps}$ for any $\eps > 0$.  This proves Theorem \ref{distbound}.

\subsection{The proof of the non-clustering lemma}

It only remains to prove Lemma \ref{noncluster}.  Suppose that $P \subset \RR^2$ is a set of $N$ points.

We first observe that if $p \in \RR^2$ and $q_1 \not=q_2 \in \RR^2$ then the lines $l_{p, q_1}$ and $l_{p, q_2}$ are skew.  By Lemma \ref{quadinter}, $l_{p, q_1}$ and $l_{p, q_2}$ are non-skew if and only if $| p - p | = |q_1 - q_2|$.  But $|p-p| = 0$ and $|q_1 - q_2| \not= 0$.  

From this observation, we can quickly establish two parts of Lemma \ref{noncluster}.  First, for any plane in $\RR^3$, at most one of the lines $\{ l_{p, q} \}_{q \in P}$ can lie in the plane.  Therefore, any plane contains at most $N$ lines of $\frak L(P)$.  Second, for any point $\RR^3$, at most one of the lines $\{ l_{p, q} \}_{q \in P}$ can contain the point.  Therefore, for any point in $\RR^3$, at most $N$ lines of $\frak L(P)$ contain the point.

Now consider an irreducible polynomial $Q$ with $1 < \Deg Q \le D$.  We will prove that $Z(Q)$ contains $\le 3 D^2 N$ lines of $\frak L(P)$, and this will finish the proof of Lemma \ref{noncluster}.

We let $\frak L_p := \{ l_{p, q} \}_{q \in \RR^2}$.  We would like to understand how many lines of $\frak L_p$ may lie in $Z(Q)$. 

\begin{lemma} \label{onebadpointZ(Q)} If $Q$ is an irreducible polynomial with $1 < \Deg Q \le D$, then there is at most one point $p \in \RR^2$ so that $Z(Q)$ contains at least $2 D^2$ lines of $\frak L_p$.
\end{lemma}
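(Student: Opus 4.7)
\textbf{Proof proposal for Lemma \ref{onebadpointZ(Q)}.}

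I plan to argue by contradiction: suppose $p_1 \ne p_2$ are two distinct points of $\RR^2$ such that $Z(Q)$ contains at least $2D^2$ lines from each of $\frak L_{p_1}$ and $\frak L_{p_2}$. Setting $V_{p_i} := \{ q \in \RR^2 : l_{p_i, q} \subset Z(Q)\}$, the hypothesis becomes $|V_{p_i}| \ge 2D^2$ for $i = 1, 2$. The first step is to analyze the defining equations of $V_{p_i}$: parametrizing $l_{p_i, q}$ by $z$ and substituting into $Q$ produces a polynomial $\sum_k a_k(q)\, z^k$, and since the substitution is affine in $q = (x_q, y_q)$, each coefficient $a_k$ lies in $\RR[x_q, y_q]$ with degree at most $D$. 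Thus $V_{p_i} = \bigcap_k Z(a_k)$. If a generic pair of $\RR$-linear combinations of the $a_k$ were coprime, Theorem \ref{bezout} would give $|V_{p_i}| \le D^2$, contradicting $|V_{p_i}| \ge 2D^2$. Hence the whole collection $\{a_k\}$ shares a non-constant common factor $g$, and $Z(g) \subset V_{p_i}$ is a positive-dimensional algebraic curve $C_{p_i}$ (passing to $\CC$ if the real zero set of $g$ happens to be too small).

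The second step promotes these curves to rulings of $Z(Q)$. As observed at the start of the proof of Lemma \ref{noncluster}, for fixed $p_i$ the lines $l_{p_i, q}$ are pairwise skew as $q$ varies, so sweeping $q$ along $C_{p_i}$ produces a genuinely $2$-dimensional algebraic surface $S_{p_i} \subset Z(Q)$. Since $Q$ is irreducible, $S_{p_i} = Z(Q)$, so through each generic point of $Z(Q)$ passes a line from $\frak L_{p_1}$ and also a line from $\frak L_{p_2}$. These two lines are distinct, since equations (\ref{eqlinex})--(\ref{eqliney}) can be inverted to recover $(p, q)$ from $l$, showing each line of $\RR^3$ has a unique representation $l_{p, q}$. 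Therefore $Z(Q)$ is doubly ruled by the two families.

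The final step uses Lemma \ref{quadinter} to extract a contradiction from this double ruling. Fix a generic line $l = l_{p_1, q}$ with $q \in C_{p_1}$, and set $d := |p_1 - p_2| > 0$. Through each point of $l$ passes a unique $\frak L_{p_2}$-line of $Z(Q)$, and this line varies with the point (otherwise two distinct points of $l$ would lie on the same $\frak L_{p_2}$-line, forcing $l$ to equal that line and hence $p_1 = p_2$, a contradiction). So $l$ meets all but finitely many of the lines $l_{p_2, q'}$ with $q' \in C_{p_2}$, and for each such meeting in $\mathbb{RP}^3$ Lemma \ref{quadinter} yields $|q - q'| = d$. Thus $C_{p_2}$ is contained in the circle of radius $d$ centered at $q$. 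Varying $q$ along $C_{p_1}$ then forces $C_{p_2}$ to lie in the intersection of infinitely many distinct circles of radius $d$ (centers tracing $C_{p_1}$); but any two such circles meet in at most two points over $\RR$ (or finitely many over $\CC$, by Bezout applied to two conics), contradicting that $C_{p_2}$ is $1$-dimensional.

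The step I expect to be most delicate is Step 1 --- upgrading $|V_{p_i}| \ge 2D^2$ to an honest $1$-dimensional component of $V_{p_i}$ --- because the naive Bezout bound only controls pairwise intersections, so one has to argue via the gcd (or via generic linear combinations) of the whole collection $\{a_k\}$. A related technical burden is that $Q$ is assumed irreducible only over $\RR$, so if $Q$ factors as a complex-conjugate pair over $\CC$ then $Z(Q)(\RR)$ is at most $1$-dimensional and the ``ruling'' argument of paragraph 2 has to be reinterpreted inside a complex-irreducible component of $Z(Q)(\CC)$; verifying that the hypothesis $|V_{p_i}| \ge 2D^2$ rules out this degenerate case is where I expect the fiddliest bookkeeping.
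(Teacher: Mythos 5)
Your route is genuinely different from the paper's. The paper never extracts a curve of parameters $q$ at all: for each $p$ it builds a nowhere-vanishing polynomial vector field $V_p$ tangent to the family $\frak L_p$ (Lemma \ref{lemV_p}), uses Theorem \ref{bezoutlines} to show that $2D^2$ lines of $\frak L_p$ in $Z(Q)$ force $Q \mid V_p\cdot\nabla Q$, exploits the linearity of $V_p$ in $p$ to get this for every $p$ on the line through $p_1,p_2$, and then at a smooth point of $Z(Q)$ concludes that infinitely many coplanar lines through that point lie in $Z(Q)$, forcing $Z(Q)$ to be a plane; the everywhere-singular case is handled by a separate short lemma. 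That argument stays entirely over $\RR$ and needs no dimension count on the set of good parameters, which is exactly where your write-up is thinnest. Your plan (gcd of the coefficients $a_k$ to get a one-parameter family of lines, double ruling, then the circle contradiction via Lemma \ref{quadinter}) is a sensible alternative and can be made to work, but two steps currently have genuine gaps rather than fiddly bookkeeping.

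First, the hypothesis gives only $2D^2$ \emph{real} points of $V_{p_i}$, and the common factor $g$ you extract may have very few real zeros (e.g.\ a factor of the form $(x_q-a)^2+(y_q-b)^2$), so over $\RR$ you do not yet have a one-parameter family of lines; you must either complexify Steps 2 and 3 or bound the number of isolated real points of $Z(g)$, and you do neither. Complexifying is viable (Lemma \ref{quadinter} is an algebraic identity), but then the case where the $\RR$-irreducible $Q$ splits over $\CC$ as $h\bar h$ is not a side issue: the two rulings could a priori lie on the two different complex components, and your argument would collapse. This case must be excluded explicitly; fortunately it is quick, since every real line in $Z(Q)$ lies in $Z(h)\cap Z(\bar h)$, which by (the complex form of) Theorem \ref{bezoutlines} contains at most $(D/2)^2 < 2D^2$ lines, so the hypothesis is vacuous there. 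Second, in Step 3 the assertion that ``$C_{p_2}$ is contained in the circle of radius $d$ centered at $q$'' is an overstatement: you only learn that infinitely many points of $C_{p_2}$ lie on that circle, and for different centers these may be different infinite subsets, so intersecting two circles does not yet yield a contradiction. The repair is a component count: each such circle, being irreducible, must be an irreducible component of the degree-at-most-$D$ curve $C_{p_2}$, which has finitely many components, while the centers range over infinitely many points of $C_{p_1}$ and give infinitely many distinct circles. With these repairs (plus the genericity statements you already indicate, and the standard lemma that pairwise non-coprimality of generic linear combinations forces a common factor of all the $a_k$), your argument closes; as written, these are the missing pieces.
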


Given Lemma \ref{onebadpointZ(Q)}, we now check that $Z(Q)$ contains at most $3 D^2 N$ lines of $\frak L(P)$.  For $N-1$ of the points $p \in P$, $Z(Q)$ contains at most $2 D^2$ of the lines $\{ l_{p, p'} \}_{p' \in P}$.  For the last point $p \in P$, $Z(Q)$ contains at most all $N$ of the lines $\{ l_{p, p'} \}_{p' \in P}$.  In total, $Z(Q)$ contains at most $(2 D^2 + 1) N$ lines of $\frak L(P)$. 

The proof of Lemma \ref{onebadpointZ(Q)} is based on a more technical lemma which describes the algebraic structure of the set of lines $\{ l_{p,q} \}$ in $\RR^3$.  

\begin{lemma} \label{lemV_p} For each $p$, each point of $\RR^3$ lies in a unique line from the set $\{ l_{p,q} \}_{q \in \RR^2}$.  Moreover, for each $p$, there is a non-vanishing vector field $V_p(x_1,x_2,x_3)$, so that at each point, $V_p(x)$ is tangent to the unique line $l_{p,q}$ through $x$.  Moreover, $V_p(x)$ is a polynomial in $p$ and $x$, with degree at most 1 in the $p$ variables and degree at most 2 in the $x$ variables. 
\end{lemma}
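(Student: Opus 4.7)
The plan is to pass directly to the defining equations
\begin{align*}
2x_1 &= (p_1+q_1) + (p_2-q_2)\,x_3, \\
2x_2 &= (p_2+q_2) + (q_1-p_1)\,x_3
\end{align*}
of the line $l_{p,q}$, where I write $p=(p_1,p_2)$, $q=(q_1,q_2)$, and $(x_1,x_2,x_3)$ for the coordinates on $\RR^3$. With $p$ and $x\in\RR^3$ fixed, this becomes a \emph{linear} system in the unknowns $(q_1,q_2)$ with coefficient matrix $\bigl(\begin{smallmatrix}1 & -x_3\\ x_3 & 1\end{smallmatrix}\bigr)$, whose determinant is $1+x_3^2>0$ for every $x_3\in\RR$. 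This immediately proves the first assertion: for each $p$ and each $x$, there is a unique $q\in\RR^2$ with $x\in l_{p,q}$, and Cramer's rule gives
$$(1+x_3^2)\,q_1 = 2x_1 - p_1 - 2p_2 x_3 + 2x_2 x_3 + p_1 x_3^2,\qquad (1+x_3^2)\,q_2 = 2x_2 - p_2 + 2p_1 x_3 - 2x_1 x_3 + p_2 x_3^2.$$

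Next I would read off the tangent direction. From the two defining equations the line $l_{p,q}$ is parametrized by $x_3\mapsto\bigl(\tfrac{p_1+q_1}{2}+\tfrac{p_2-q_2}{2}x_3,\ \tfrac{p_2+q_2}{2}+\tfrac{q_1-p_1}{2}x_3,\ x_3\bigr)$, so its direction vector is proportional to $(p_2-q_2,\ q_1-p_1,\ 2)$. Substituting the Cramer expressions for $q_1,q_2$ and \emph{multiplying through by $1+x_3^2$} to clear denominators gives an honest polynomial vector field
$$V_p(x)\;=\;\bigl(\,(p_2-x_2)+x_3(x_1-p_1),\ \ (x_1-p_1)+x_3(x_2-p_2),\ \ 1+x_3^2\,\bigr)$$
(after removing a harmless factor of $2$). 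A direct inspection shows $V_p$ is polynomial of degree $\le 1$ in $p$ and $\le 2$ in $x$, and its third component $1+x_3^2$ is strictly positive, so $V_p$ is nowhere zero. By construction $V_p(x)$ is a nonzero scalar multiple of the direction vector of the unique $l_{p,q}$ through $x$, hence tangent to it.

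There is no real obstacle — the lemma is essentially an explicit computation — but the one point that has to be handled with care is the choice of normalization. One must clear the denominator $1+x_3^2$ \emph{before} declaring $V_p$, since the raw direction vector $(p_2-q_2,\ q_1-p_1,\ 2)$ has components that are rational, not polynomial, in $x$; multiplying by $1+x_3^2$ is the natural fix, gives the sharp degree bounds in the statement, and simultaneously guarantees non-vanishing because $1+x_3^2$ never vanishes on $\RR$. After that, verifying the claimed degrees in $p$ and $x$ is just a glance at the displayed formula.
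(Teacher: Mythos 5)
Your proposal is correct and follows essentially the same route as the paper: solve the defining equations of $l_{p,q}$ as a linear system in $q$ with coefficient matrix of determinant $1+x_3^2>0$ (giving uniqueness), take the direction vector $(p_2-q_2,\,q_1-p_1,\,2)$, substitute, and multiply by $1+x_3^2$ to obtain a polynomial, nowhere-vanishing tangent field of degree $\le 1$ in $p$ and $\le 2$ in $x$. Your explicit formula for $V_p$ checks out; the paper merely leaves the cleared numerators abstract rather than writing them out.
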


Let us assume this technical lemma for the moment and use it to prove Lemma \ref{onebadpointZ(Q)}.

Fix a point $p \in \RR^2$.  Suppose $Z(Q)$ contains at least $2 D^2$ lines from the set $\frak L_p := \{ l_{p,q} \}_{p,q \in \RR^2}$.  On each of these lines, $Q$ vanishes identically, and $V_p$ is tangent to the line.  Therefore, $V_p \cdot \nabla Q$ vanishes on all these lines.  But $V_p \cdot \nabla Q$ is a polynomial in $x$ of degree at most $2 D - 2$.  If $V_p \cdot \nabla Q$ and $Q$ have no common factor, then the Bezout theorem for lines, Theorem \ref{bezoutlines}, implies that there are at most $2 D^2 - 2D$ lines where the two polynomials vanish.  Therefore, $V_p \cdot \nabla Q$ and $Q$ have a common factor.  Since $Q$ is irreducible, $Q$ must divide $V_p \cdot \nabla Q$, and we see that $V_p \cdot \nabla Q$ vanishes identically on $Z(Q)$.  

Now suppose that $Z(P)$ contains at least $2 D^2$ lines from $\frak L_{p_1}$ and from $\frak L_{p_2}$.  We see that $V_{p_1} \cdot \nabla Q$ and $V_{p_2} \cdot \nabla Q$ vanish on $Z(Q)$.  For each fixed $x$, the expression $V_p \cdot \nabla Q$ is a degree 1 polynomial in $p$.  Therefore, for any point $p$ in the affine span of $p_1$ and $p_2$, $V_p \cdot \nabla Q$ vanishes on $Z(Q)$.  

Suppose that $Z(Q)$ has a non-singular point $x$, which means that $\nabla Q(x) \not= 0$.  In this case, $x$ has a smooth neighborhood $U_x \subset Z(Q)$ where $\nabla Q$ is non-zero.  If $V_p \cdot \nabla Q$ vanishes on $Z(Q)$, then the vector field $V_p$ is a vector field on $U_x$, and so its integral curves lie in $U_x$.  But the integral curves of $V_p$ are exactly the lines of $\frak L_p$.  Therefore, for each $p$ on the line connecting $p_1$ and $p_2$, the line of $\frak L_p$ through $x$ lies in $Z(Q)$.  Since $x$ is a smooth point, all of these lines must lie in the tangent plane $T_x Z(Q)$, and we see that $Z(Q)$ contains infinitely many lines in a plane.  Using Bezout's theorem, Theorem \ref{bezoutlines}, again, we see that $Z(Q)$ is a plane, and that $Q$ is a degree 1 polynomial.  This contradicts our assumption that $\Deg Q > 1$.  

We have now proven Lemma \ref{onebadpointZ(Q)} in the case that $Z(Q)$ contains a non-singular point.  But if every point of $Z(Q)$ is singular, then we get an even stronger estimate on the lines in $Z(Q)$:

\begin{lemma} Suppose that $Q$ is a non-zero irreducible polynomial of degree $D$ on $\RR^3$.  If $Z(Q)$ has no non-singular point, then $Z(Q)$ contains at most $D^2$ lines.
\end{lemma}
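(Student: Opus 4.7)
The plan is to combine the singular-locus hypothesis with the B\'ezout theorem for lines (Theorem~\ref{bezoutlines}). Saying that every point of $Z(Q)$ is singular means precisely that each partial derivative $\partial_i Q$ vanishes identically on $Z(Q)$. Consequently, if $\ell$ is any line in $Z(Q)$, then $\partial_i Q$ vanishes on $\ell$, so $\ell \subset Z(Q) \cap Z(\partial_i Q)$. If I can choose an index $i$ for which $\partial_i Q$ is a non-zero polynomial sharing no common factor with $Q$, then Theorem~\ref{bezoutlines} immediately gives a bound of $(\Deg Q)(\Deg \partial_i Q) \le D(D-1) \le D^2$ lines in $Z(Q) \cap Z(\partial_i Q)$, which bounds the lines in $Z(Q)$.

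Carrying this out: since $\Deg Q = D \ge 1$ and $\RR$ has characteristic zero, at least one of $\partial_1 Q, \partial_2 Q, \partial_3 Q$ is a non-zero polynomial (otherwise $Q$ would be a constant). Fix such an $i$; then $\Deg \partial_i Q \le D-1 < D$. Since $Q$ is irreducible, any common factor of $Q$ and $\partial_i Q$ must, up to scalars, divide $Q$ and therefore equal either $1$ or $Q$. It cannot equal $Q$, because $\Deg \partial_i Q < \Deg Q$ would then force $\partial_i Q = 0$, contradicting the choice of $i$. Hence $Q$ and $\partial_i Q$ have no common factor, and Theorem~\ref{bezoutlines} closes the argument.

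There is no serious obstacle: the lemma is short precisely because the hypothesis that every point of $Z(Q)$ is singular is an unusually strong condition on $Q$, and it meshes directly with a B\'ezout bound via the gradient. The only step that warrants a moment of care is verifying the no-common-factor condition, which follows cleanly from irreducibility together with the strict degree drop $\Deg \partial_i Q < \Deg Q$. (The case $D=1$ is vacuous, since the gradient of a non-zero linear polynomial is a non-zero constant vector, so every point of $Z(Q)$ is non-singular and the hypothesis fails.)
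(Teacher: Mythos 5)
Your proof is correct and follows essentially the same route as the paper: both arguments rest on the observation that every $\partial_i Q$ vanishes on $Z(Q)$, combined with the B\'ezout theorem for lines (Theorem~\ref{bezoutlines}) and the irreducibility of $Q$. The only difference is organizational — you argue directly by fixing one non-vanishing partial derivative and verifying the no-common-factor hypothesis (even getting the slightly sharper bound $D(D-1)$), whereas the paper runs the same ingredients as a proof by contradiction showing all partials would have to vanish identically.
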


\begin{proof} Since every point of $Z(Q)$ is singular, $\nabla Q$ vanishes on $Z(Q)$.  In particular, each partial derivative $\partial_i Q$ vanishes on $Z(Q)$.  We suppose that $Z(Q)$ contains more than $D^2$ lines and derive a contradiction.  Since $\partial_i Q = 0$ on $Z(Q)$ and $Z(Q)$ contains more than $D^2$ lines, then Bezout's theorem, Theorem \ref{bezoutlines}, implies that $Q$ and $\partial_i Q$ have a common factor.  Since $Q$ is irreducible, $Q$ must divide $\partial_i Q$.  Since $\Deg \partial_i Q < \Deg Q$, it follows that $\partial_i Q$ is identically zero for each $i$.  This implies that $Q$ is constant.  By assumption, $Q$ is not the zero polynomial and so $Z(Q)$ is empty.  But we assumed that $Z(Q)$ contains at least $D^2 + 1$ lines, giving a contradiction.
\end{proof}

This finishes the proof of Lemma \ref{onebadpointZ(Q)} assuming Lemma \ref{lemV_p}.  
It only remains to prove Lemma \ref{lemV_p}.  

First we check that each point $x \in \RR^3$ lies in exactly one of the lines $\{ l_{p,q} \}_{q \in \RR^2}$.  Suppose $p = (p_1, p_2)$ and $q=(q_1, q_2)$ are points in $\RR^2$.  Using Equation \ref{eqlinex} and \ref{eqliney}, we see that $(x_1, x_2, x_3) \in l_{p,q}$ if and only if:

\begin{equation} \label{eqlinex'} 
2 x_1 = (p_1 + q_1) + (p_2 - q_2) x_3.
 \end{equation}

\begin{equation} \label{eqliney'} 
2 x_2 = (p_2 + q_2) + (q_1 - p_1) x_3.
 \end{equation}

We can rewrite these equations as a matrix equation for $q$ as follows:

$$\left( \begin{array}{cc}
1 & - x_3 \\
x_3 & 1 \end{array} \right)   \left( \begin{array}{cc} q_1 \\ q_2 \end{array} \right) = \left( 2x_1 - p_1 - x_3 p_2, 2 x_2 - p_2 + p_1 x_3    \right) =: a_p(x), $$

Note that $a_p(x)$ is a vector whose entries are polynomials in $x, p$ of degree $\le 1$ in $x$ and degree $\le 1$ in $p$.  Since the determinant of the matrix on the left-hand side is $1 + x_3^2 > 0$, we can uniquely solve this equation for $q_1$ and $q_2$.  The solution has the form 

\begin{equation} \label{qfromxp} q_1 = (x_3^2 + 1)^{-1} b_{1,p}(x); q_2 = (x_3^2 + 1)^{-1} b_{2,p}(x), \end{equation}

\noindent where $b_1, b_2$ are polynomials in $x, p$ of degree $\le 2$ in $x$ and degree $\le 1$ in $p$.  

We have now proven that each point of $\RR^3$ lies in a unique line from the set $\{ l_{p,q} \}_{q \in \RR^2}$.  Now we can construct the vector field $V_p$.  From Equations \ref{eqlinex'} and \ref{eqliney'}, we see that the vector $(  {p_2 - q_2}, {q_1 - p_1}, 2)$ is tangent to $l_{p,q}$.  If $x \in l_{p,q}$, then we can use Equation \ref{qfromxp} to expand $q$ in terms of $x, p$, and we see that the following vector field is tangent to $l_{p,q}$ at $x$:

$$ v_p(x) := (p_2 - (x_3^2+1)^{-1} b_{2,p}(x), (x_3^2+1)^{-1} b_{1,p}(x) - p_1, 2). $$

The coefficients of $v_p(x)$ are not polynomials because of the $(x_3^2+1)^{-1}$.  We define $V_p(x) = (x_3^2 +1) v_p(x)$, so

$$ V_p(x) = \left( p_2 (x_3^2+1) - b_{2,p}(x), b_{1,p}(x) - p_1( x_3^2 + 1), 2 x_3^2 + 2 \right). $$

The vector field $V_p(x)$ is tangent to the family of lines $\{ l_{p,q} \}_{q \in \RR^2}$.  Moreover, $V_p$ never vanishes because its last component is $2 x_3^2 +2$.  Therefore, the integral curves of $V_p$ are exactly the lines $\{ l_{p,q} \}_{q \in \RR^2}$.  Moreover, each component of $V_p$ is a polynomial of degree $\le 2$ in $x$ and degree $\le 1$ in $p$.

This finishes the proof of Lemma \ref{lemV_p} and hence the proof of Lemma \ref{noncluster}.


\begin{thebibliography}{5}

\vskip.125in

%\bibitem[A]{A} T. Apostol, {\it Introduction to Analytic Number Theory}, Springer, 1976.

%\bibitem[CSzT]{CSzT} F. R. K. Chung, E Szemer\'edi, and W.T. Trotter, {\it The number of different distances determined by a set of points in the Euclidean plane}, Discrete Comput. Geom. (1992)  7,  1-11.

\bibitem[CEGSW]{CEGSW} K.L. Clarkson, H. Edelsbrunner, L. Guibas, M Sharir, and E. Welzl,
{ Combinatorial Complexity bounds for arrangements of curves and spheres}, Discrete Comput. Geom.
(1990)  5,  99-160.

%\bibitem[D]{D} Z. Dvir, { On the size of Kakeya sets in finite fields},  J. Amer. Math Soc. (2009) 22, 1093-1097.

\bibitem[E]{E} P. Erd{\H o}s, { On sets of distances of n points}, Amer. Math. Monthly (1946) 53, 248-250.

\bibitem[EKS]{EKS} Gy. Elekes, H. Kaplan, and M. Sharir, { On lines, joints, and incidences in three dimensions},
Journal of Combinatorial Theory, Series A (2011) 118, 962-977.

\bibitem[ES]{ES} Gy. Elekes and M. Sharir, { Incidences in three dimensions and distinct distances
in the plane}, Combinat. Probab. Comput., 20 (2011), 571-608.

%\bibitem[FS]{FS} S. Feldman and M. Sharir, { An improved bound for joints in arrangements of lines in space}, Discrete Comput. Geom. (2005) 33, 307-320.

%\bibitem[GIS]{GIS} J. Garibaldi, A. Iosevich, and S. Senger,  { The Erd{\H o}s Distance problem}, AMS Student Mathematical Library Volume 56 (2011).

%\bibitem[G]{G}  L. Guth, { The endpoint case of the Bennett-Carbery-Tao Multilinear Kakeya Conjecture}, Acta Mathematica 205 (2): 263-286.

\bibitem[GK]{GK} L. Guth and N. H. Katz, { Algebraic methods in discrete analogues of the Kakeya
problem}, Adv. in Math. (2010)  225,  2828-2839.

\bibitem[GK2]{GK2} L. Guth and N. H. Katz, { On the Erd{\H o}s distinct distances problem in the plane},  arXiv:1011.4105, accepted for publication in Annals of Math.

\bibitem[KMS]{KMS} H. Kaplan, J. Matou\u{s}ek, and M. Sharir, { Simple proofs of classical theorems in discrete geometry 
via the Guth--Katz polynomial partitioning technique}, Discrete Comput. Geom. 48 (2012), 499-517.

%\bibitem[KSS]{KSS} H. Kaplan, M. Sharir, and E. Shustin, { On lines and joints}, Discrete Comput. Geom. (2010) 44, 838-843.

%\bibitem[KT]{KT} N.H. Katz  and G. Tardos, { A new entropy inequality for the Erd{\H o}s distance problem} in Towards a new theory of geometric graphs,  Contemp. Math.  (2004)  342, 119-126.

%\bibitem[Land]{Land} J.M. Landsberg, { Is a linear space contained in a submanifold - On the number of the derivatives needed to tell}, Journal f\"ur die reine und angewandte Mathematik (1999) 508,   53 - 60.

\bibitem[Mi]{Mi} J. Milnor, On the Betti numbers of real varieties, Proc. AMS 15, (1964) 275-280.

%\bibitem[M]{M} L. Moser, { On the different distances determined by n points}, Amer. Math. Monthly (1952) 59,  85-91.

\bibitem[OP]{OP} O. A. Oleinik, I. B. Petrovskii, On the topology of real algebraic surfaces, Izv. Akad. Nauk SSSR
13, (1949) 389-402.

%\bibitem[Q]{Q} R. Quilodr\'an, { The joints problem in ${\bf R^n}$}, Siam J. Discrete Math., Vol. 23, 4, p. 2211-2213.

%\bibitem[Salm]{Salm} G. Salmon,  { A Treatise on the Analytic Geometry of Three Dimensions}, Vol. 2, 5th edition Hodges, Figgis And Co. Ltd. (1915).

%\bibitem[Seg]{Seg} B. Segre,{ The number of lines lying on a quartic surface}, Quart. J. Math. (1943)  86 - 96.

\bibitem[SS]{SS} M. Sharir and N. Solomon, Incidences between points and lines in four dimensions, Proc. 30th ACM Symp. on Computational Geometry (2014), to appear. 

%\bibitem[SW]{SW} M. Sharir and E. Welzl, { Point-Line incidences in space, Combinatorics, Probability, and Computing} (2004) 13(2), 203-220.

\bibitem[SoTa]{SolTao} J. Solymosi and T. Tao, { An incidence theorem in higher dimensions}, Discrete Comput. Geom. Discrete Comput. Geom. 48 (2012), no. 2, 255-280. 

%\bibitem[SoTo]{SoTo} J. Solymosi and C. T\'oth, { Distinct Distances in the plane}, Discrete Comput. Geom. (2001)  25, 629-634.

\bibitem[StTu]{ST}  Stone, A.H. and Tukey, J. W. , { Generalized sandwich theorems},
Duke Math. Jour. (1942)  9,  356-359.

\bibitem[Sz]{Sz} L. Sz\'ekely, { Crossing numbers and hard Erd{\H o}s problems in discrete geometry},  Combin. Probab. Comput.  (1997) 6  no. 3, 353-358.

\bibitem[SzTr]{SzTr} E. Szemer\'edi and W. T. Trotter Jr., { Extremal Problems in Discrete Geometry},
Combinatorica (1983)  3, 381-392.

%\bibitem[T]{T}  G. Tardos, { On distinct sums and distinct distances}, Adv. Math. (2003) 180, 275-289.

\bibitem[Th]{Th} R. Thom, Sur l'homologie des vari\'et\'es alg\'ebriques r\'eelles, Differential and Combinatorial Topology, (Symposium in Honor of Marston Morse), Ed. S.S. Cairns, Princeton Univ. Press, (1965) 255-265.

\bibitem[VW]{VW} B. L. van der Waerden, {\it Modern Algebra}, Frederick Ungar Publishing Co., New York, 1950.

\end{thebibliography}
\end{document}